
\documentclass[12pt]{amsart}
\usepackage{amscd,amssymb,graphics}

\usepackage{amsfonts}
\usepackage{amsmath}
\usepackage{amsxtra}
\usepackage{latexsym}
\usepackage[mathcal]{eucal}
\usepackage[all]{xy}

\usepackage{graphics,colortbl}

\input xy
\xyoption{all}
\usepackage{epsfig}

\usepackage[pdftex,bookmarks,colorlinks,breaklinks]{hyperref}

\oddsidemargin 0.1875 in \evensidemargin 0.1875in
\textwidth 6 in 
\textheight 230mm \voffset=-4mm

\newtheorem{theorem}{Theorem}[section]
\newtheorem{fact}[theorem]{Fact}
\newtheorem{corollary}[theorem]{Corollary}
\newtheorem{lemma}[theorem]{Lemma}
\newtheorem{proposition}[theorem]{Proposition}

\newtheorem{question}[theorem]{Question}

\newtheorem{definition}[theorem]{Definition}
\numberwithin{equation}{section}

\theoremstyle{remark}
\newtheorem{remark}[theorem]{Remark}
\newtheorem{example}[theorem]{Example}

\newcommand{\ben}{\begin{enumerate}}
	\newcommand{\een}{\end{enumerate}}
\newcommand{\bit}{\begin{itemize}}
	\newcommand{\eit}{\end{itemize}}

\def\R {{\Bbb R}}
\def\Q {{\Bbb Q}}
\def \F {{\Bbb F}}

\def\C {{\Bbb C}}
\def\N{{\Bbb N}}
\def\T{{\Bbb T}}

\def\F{{\Bbb F}}

\def\Aut{{\mathrm Aut}\,}

\def\QED{\nobreak\quad\ifmmode\roman{Q.E.D.}\else{\rm Q.E.D.}\fi}

\def\hull#1{\langle#1\rangle}

\def\PSL{\operatorname{PSL}}
\def\GL{\operatorname{GL}}
\def\SL{\operatorname{SL}}
\def\UT{\operatorname{UT}}
\def\SO{\operatorname{SO}}

\def\STP{\operatorname{ST^+}}
\def\Inn{\operatorname{Inn}}
\def\Aut{\operatorname{Aut}}

\DeclareMathOperator{\res}{\restriction}
\DeclareMathOperator{\ISO}{Iso}
\begin{document}
	\title[]{Minimality of the inner automorphism  group}
	\author[D. Peng, M. Shlossberg]{Dekui Peng, Menachem Shlossberg}

   \address[D. Peng]
	{\hfill\break Institute of Mathematics,
		\hfill\break Nanjing Normal University, 210024,
		\hfill\break China}
\email{pengdk10@lzu.edu.cn}

	\address[M. Shlossberg]
	{\hfill\break School of Computer Science
		\hfill\break Reichman University, 4610101 Herzliya
		\hfill\break Israel}
	\email{menachem.shlossberg@post.runi.ac.il}
	\subjclass[2020]{22D05, 22E15, 20E36, 11SXX, 11A41}
		\keywords{minimal  group, the inner automorphism group, Birkhoff topology, Lie group, $z$-minimal group, $z$-Minimality Criterion} 
	\maketitle	
	\setcounter{tocdepth}{1}
	\begin{abstract} By \cite{DHPXX},
	a minimal group $G$  is called \textit{$z$-minimal}  if $G/Z(G)$ is minimal.   In this paper,  we present the \textit{$z$-Minimality Criterion} for  dense subgroups with some applications to topological matrix groups.  
For  a locally compact  group $G$,  let $\Inn(G)$ be the group of all inner automorphisms of $G,$ endowed with the Birkhoff topology.   Using  a theorem by Goto \cite{Goto73}, we obtain our main result which asserts that if $G$ is a connected Lie group    and $H\in\{G/Z(G), \Inn(G)\},$ then $H$ is minimal if and only if it   is centre-free and topologically isomorphic to  $\Inn(G/Z(G)).$ In particular, if $G$ is a connected Lie group with discrete centre, then  $\Inn(G)$ is minimal.
We prove  that  a  connected locally compact nilpotent group is $z$-minimal if and only if it is compact abelian. In contrast, we show that there exists a connected  metabelian $z$-minimal Lie group that is neither compact nor abelian.
\end{abstract}
\tableofcontents

\section{Introduction}

All topological groups in this paper are Hausdorff.
 Recall that a
topological group $G$ is  \emph{minimal} \cite{Doitch,S71} if
 it does not admit a strictly coarser Hausdorff group topology.   By Prodanov--Stoyanov \cite{PS},  minimal abelian groups  are  precompact. If  every quotient of $G$ is minimal, then $G$ is called \emph{totally minimal} \cite{DP}. A totally minimal group is minimal while the converse is not true. Indeed,
  by  Dierolf and Schwanengel \cite{DS79} the group
 \begin{align*}
 	\R \rtimes \R_{+} & \cong \left\{\left( \begin{array}{cc}
 		a & b  \\
 		0 & 1
 	\end{array}  \right) \middle|  \ a \in \R_{+}, \  b \in \R \right\}.
 \end{align*}
is minimal but not totally minimal.

Minimality is not preserved under taking closed subgroups while a closed central subgroup of a minimal group is minimal (see, for example, \cite[Proposition 7.2.5]{DPS89}). This  led to the study  of the \emph {$c$-minimal} groups in \cite{XS}.
Namely, the topological groups having all closed subgroups minimal.
  For more information about minimality we refer the reader to the survey paper \cite{DM14}.

 Recently,  
 the authors of \cite{DHPXX} introduced  the following concept which we continue to explore in this paper.
  \begin{definition}
 	A minimal group $G$  is $z$-minimal  if $G/Z(G)$ is minimal.
 \end{definition}
Clearly, for every centre-free topological group $G$ we have the topological isomorphism $G\cong G/Z(G).$ So,  a centre-free topological group is $z$-minimal if and only if it is minimal.

The product of finitely many complete minimal groups is minimal (see, for example, \cite[Corollary 7.3]{DPS89}). Uspenskij \cite{U} asked whether the class of complete minimal groups is closed to  infinite products. Answering this question partially, Megrelishvili \cite{MEG95} proved that the product of minimal groups with finite centre is minimal.  The authors of \cite{DHPXX} extended this result to a family of  $z$-minimal groups $\{G_i: i\in I\}$ for which the product  $\prod_{i\in I} Z(G_i)$ is minimal. In particular, the product of complete $z$-minimal groups is minimal.

In  Theorem  \ref{thm:zcrit}, we present the \textit{$z$-Minimality Criterion} for  dense subgroups. 	As in the papers \cite{MS,SH}, some applications to Number Theory are provided (see  Proposition \ref{prop:sfree} and Theorem \ref{thm:minprod}).

For a  a locally compact group  $G$ one can consider two group topologies on   $\Inn(G).$
One is the topology induced by the quotient topology on $G/Z(G)$ and the other is the Birkhoff topology which is always coarser. 
 Among the  many researchers who studied the minimality of Lie groups one can find
 van Est \cite{Est}, Omori \cite{Omori}, Goto \cite{Goto73} and  Remus--Stoyanov \cite{RS}.  
As our main result, we prove in Theorem \ref{thm:curmain} and Theorem \ref{thm: minofgzg} that if  $G$ is a connected Lie group    and $H\in\{G/Z(G), \Inn(G)\},$ then $H$ is minimal if and only if it   is centre-free and topologically isomorphic to  $\Inn(G/Z(G)).$
In particular, if $G$ is a connected Lie group with discrete centre (e.g., when $G$ is a semi-simple connected Lie group), then  $\Inn(G)$ is minimal (see Theorem \ref{Th:Min}).
By corollary \ref{cor:ofnilp},  a  connected locally compact nilpotent group is $z$-minimal if and only if it is compact abelian. In contrast, we show in Example \ref{ex:solv}  that there exists a connected  metabelian $z$-minimal Lie group that is neither compact nor abelian.

\subsection{Notation and terminology}
The fields of rationals, reals and complex numbers   are denoted  by $\mathbb{Q},\mathbb{R} $ and  $\mathbb{C}$, respectively, and $\T=\{a\in \C\ : \ |a|=1\}$ denotes the unit circle group.
For a prime number $p$,  $\Q_p$ is the field of $p$-adic numbers equipped with the $p$-adic topology $\tau_p.$ We denote by $\N$ and $\R_{+}$ the set of positive natural numbers and positive reals, respectively.
%

Let $G$ be a group and $x\in G.$
 We denote by  $\langle x\rangle$ the subgroup of $G$ generated by $x.$
The group $G$ is \emph{solvable} if  there exist $k\in \N$ and a subnormal series $$G_0=\{e\}\unlhd G_1\unlhd \cdots \unlhd G_k=G,$$  where $e$ denotes the identity element, such that the quotient group $G_j/G_{j-1}$  is abelian for every $j\in \{1,\ldots, k\}.$ In particular, $G$ is  {\em metabelian}, if $k\leq 2.$
The subgroup $Z(G)$ denotes the \emph{centre} of $G,$ and we set $Z_0(G) = \{e\}$ and  $Z_1(G) = Z(G)$.  For $n > 1$, the  \emph{$n$-th centre} $Z_n(G)$ is defined as follows:
$$Z_n(G)=\{x\in G: [x,y]\in Z_{n-1}(G)  \text{ for every } y\in G\},$$
where $[x,y]$ denotes the commutator $xyx^{-1}y^{-1}$.  A group is {\em nilpotent} if $Z_n(G) = G$ for some $n\in \N$. 
The {\em derived subgroup} of $G$ is generated by all commutators $[x,y]$, where $x,y\in G$.

 A  topological group is called  \emph{complete} if it is complete with respect to its two-sided uniformity, and it is \emph{precompact} if its completion is compact. 
 
For a locally compact group $G$, we denote by $\Aut(G)$ and $\Inn(G)$ the group of all automorphism of $G$ and its subgroup of all inner automorphisms, respectively.  It is known that algebraically $G/Z(G)$ is isomorphic to $\Inn(G)$. This isomorphism is given by sending $gZ(G)$ to $I_g,$ the inner automorphism induced by $g\in G.$
\section{The $z$-Minimality Criterion}
 For a subgroup  $H$ of a topological group $(G,\gamma)$ denote by $\gamma|_{H}$ and  $\gamma/H$ the subspace topology and the quotient topology induced by $\gamma$ on $H$ and $G/H$, respectively. The following result is known as \textit{Merson's Lemma} (see \cite[Lemma 7.2.3]{DPS89} or \cite[Lemma 4.4]{DM14}).
 \begin{fact}\label{firMer}
 	Let $(G,\gamma)$ be a (not necessarily Hausdorff) topological group and $H$ be a subgroup of $G.$ If $\gamma_1\subseteq \gamma$ is a coarser group topology on $G$ such that
 	$\gamma_1|_{H}=\gamma|_{H}$ and $\gamma_1/H=\gamma/H$, then $\gamma_1=\gamma.$
 \end{fact}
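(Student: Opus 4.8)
The plan is to establish the reverse inclusion $\gamma\subseteq\gamma_1$; together with the hypothesis $\gamma_1\subseteq\gamma$ this yields $\gamma=\gamma_1$. Since $\gamma$ and $\gamma_1$ are group topologies and left translations are homeomorphisms for both, it suffices to show that every $\gamma$-neighborhood $U$ of the identity $e$ is also a $\gamma_1$-neighborhood of $e$. I would argue by contradiction: suppose some $\gamma$-neighborhood $U$ of $e$ fails to be a $\gamma_1$-neighborhood of $e$. Then for every $\gamma_1$-open set $W\ni e$ one may pick a point $w_W\in W\setminus U$; ordering these $W$ by reverse inclusion makes $(w_W)$ a net with $w_W\to e$ in $\gamma_1$ while $w_W\notin U$ for all $W$.

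The next step is to transport this net to the coset space $G/H$. Let $\pi\colon G\to G/H$ be the canonical map and let $\sigma$ denote the quotient topology; the hypothesis $\gamma/H=\gamma_1/H$ says precisely that $\sigma$ is obtained the same way from $\gamma$ and from $\gamma_1$. Continuity of $\pi$ on $(G,\gamma_1)$ gives $\pi(w_W)\to\pi(e)$ in $\sigma$. The decisive structural fact is that $\pi\colon(G,\gamma)\to(G/H,\sigma)$ is an \emph{open} map, as always holds for the quotient of a topological group onto a coset space, since $\pi^{-1}(\pi(O))=OH$ is $\gamma$-open whenever $O$ is. Using openness of $\pi$, I would lift the convergence $\pi(w_W)\to\pi(e)$: indexing by those pairs $(W,V)$, with $V$ a $\gamma$-neighborhood of $e$, for which $\pi(w_W)\in\pi(V)$ (a cofinal, hence directed, index set), and choosing $g_{(W,V)}\in V\cap\pi^{-1}(\pi(w_W))$, one obtains a net $(g_i)$ with $g_i\to e$ in $\gamma$ and $\pi(g_i)=\pi(w_{W(i)})$, where $(w_{W(i)})$ is a subnet of $(w_W)$.

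Now all three hypotheses enter at once. Since $\pi(g_i)=\pi(w_{W(i)})$, the element $h_i:=g_i^{-1}w_{W(i)}$ lies in $H$. From $g_i\to e$ in $\gamma$ we get $g_i\to e$ in the coarser topology $\gamma_1$, and $w_{W(i)}\to e$ in $\gamma_1$ because it is a subnet of $(w_W)$; hence $h_i\to e$ in $\gamma_1$. But $h_i\in H$ and $\gamma_1|_H=\gamma|_H$, so in fact $h_i\to e$ in $\gamma$. Therefore $w_{W(i)}=g_ih_i\to e\cdot e=e$ in $\gamma$, which forces $w_{W(i)}\in U$ for large $i$, contradicting $w_{W(i)}\notin U$. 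Hence $U$ is a $\gamma_1$-neighborhood of $e$, and the lemma follows. Note that Hausdorffness of $\gamma$ is never used: the only consequence of $w\to e$ in $\gamma$ invoked is that $w$ is eventually inside each $\gamma$-neighborhood of $e$.

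I expect the lifting step through the open map $\pi$ to be the point requiring care: the index set of the lifted net must be arranged so that simultaneously $g_i\to e$ in $\gamma$ and $(w_{W(i)})$ is a genuine subnet of $(w_W)$, which amounts to the standard fact that convergence in the base of an open continuous surjection lifts, along a subnet, to convergence upstairs. The remaining algebra — writing $w=g\cdot h$ with $g$ absorbing the $G/H$-direction and $h\in H$ the fiber direction, then reading off the limits from the three coincidences — is routine. One can equally run the argument with filters, replacing $(w_W)$ by a filter on $G$ that refines the $\gamma_1$-neighborhood filter of $e$ and contains $G\setminus U$ and lifting its $\pi$-image along $\pi$; the combinatorial core is identical.
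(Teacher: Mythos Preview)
The paper does not supply its own proof of this statement: it is recorded as a \emph{Fact} (Merson's Lemma) with references to \cite[Lemma~7.2.3]{DPS89} and \cite[Lemma~4.4]{DM14}, so there is no in-paper argument to compare against.

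Your proof is correct. The lifting step is the only delicate point, and you have handled it properly: the index set $I=\{(W,V):\pi(w_W)\in\pi(V)\}$ with the product of the reverse-inclusion orders is directed (given $(W_1,V_1),(W_2,V_2)$, set $V_3=V_1\cap V_2$ and use $\pi(w_W)\to\pi(e)$ to find $W_3\subseteq W_1\cap W_2$ with $\pi(w_{W_3})\in\pi(V_3)$), the projection $(W,V)\mapsto W$ is monotone and cofinal so $(w_{W(i)})$ is a genuine subnet, and $g_{(W,V)}\in V$ forces $g_i\to e$ in $\gamma$. The decomposition $w_{W(i)}=g_ih_i$ with $h_i\in H$ then lets the two hypotheses act exactly as you describe.

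For comparison, the argument in the cited sources is typically phrased directly with neighborhoods rather than nets: one fixes a $\gamma$-neighborhood $U$ of $e$, chooses a $\gamma$-open $V\ni e$ with $V^{-1}V\subseteq U$, observes that $VH=\pi^{-1}(\pi(V))$ is $\gamma_1$-open (since $\pi(V)$ is open in $\gamma/H=\gamma_1/H$) and that $V\cap H$ is open in $(H,\gamma_1|_H)$, and then exhibits a $\gamma_1$-neighborhood of $e$ contained in $U$. Your net-theoretic route packages the same three ingredients---openness of $\pi$, coincidence on $H$, coincidence on $G/H$---but trades the explicit neighborhood construction for a convergence argument; it is slightly longer but perhaps more transparent about \emph{where} each hypothesis is used.
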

Recall that a subset $H$ of a group $G$ is called {\it unconditionally closed} if $H$ is closed in every Hausdorff group topology of $G$ (see \cite{Markov}).
It is   known that the $n$-centres are unconditionally closed.
 \begin{lemma}\label{lem:zminiff}A topological group $G$ is $z$-minimal if and only if both $Z(G)$ and $G/Z(G)$ are minimal.\end{lemma}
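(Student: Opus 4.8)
The plan is to prove the two implications of the biconditional separately, each being a short application of Merson's Lemma (Fact~\ref{firMer}) together with the fact, recorded just before the statement, that $Z(G)$ is unconditionally closed in $G$ (being the centralizer of $G$).

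\emph{From $z$-minimality of $G$ to minimality of $Z(G)$ and of $G/Z(G)$.} By definition $z$-minimality already says that $G$ and $G/Z(G)$ are minimal, so the only thing to check is that $N:=Z(G)$ is minimal; in fact I would establish the formally stronger assertion that the centre of \emph{any} minimal group is minimal. Let $\gamma$ be the topology of $G$ and let $\sigma\subseteq\gamma|_N$ be an arbitrary coarser Hausdorff group topology on $N$; the goal is $\sigma=\gamma|_N$. Consider the filter on $G$ with base $\{V\cdot O\}$, where $V$ ranges over the $\sigma$-neighbourhoods of $e$ in $N$ and $O$ over the $\gamma$-neighbourhoods of $e$ in $G$. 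Using that $N$ is central (so the factor $V\subseteq N$ commutes past everything), one readily verifies the standard axioms for a neighbourhood base at the identity, obtaining a group topology $\gamma'$ on $G$. Three facts are then immediate: $\gamma'\subseteq\gamma$ (since $V\cdot O\supseteq O$); $\gamma'|_N=\sigma$; and $\gamma'/N=\gamma/N$ (the open quotient map collapses each $V\subseteq N$). The delicate point, and the main obstacle, is to see that $\gamma'$ is Hausdorff: if $x\in\bigcap V\cdot O$, then the image of $x$ in $G/N$ lies in $\bigcap_O\pi(O)$, which is trivial since $N$ is $\gamma$-closed, so $x\in N$; writing $x=v_Ow_O$ with $v_O\in V$ and $w_O\in O\cap N$ and letting $O$ shrink, the net $(w_O)$ tends to $e$ in $\gamma|_N$, hence in $\sigma$, so $v_O=xw_O^{-1}\to x$ in $\sigma$; choosing $V$ from a base of $\sigma$-closed neighbourhoods forces $x\in V$, and then $x=e$ because $\sigma$ is Hausdorff. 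Finally, minimality of $(G,\gamma)$ applied to $\gamma'\subseteq\gamma$ gives $\gamma'=\gamma$, whence $\sigma=\gamma'|_N=\gamma|_N$. (Alternatively, if one is willing to quote it, this direction follows at once from the known fact that the centre of a minimal group is minimal.)

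\emph{From minimality of $Z(G)$ and of $G/Z(G)$ to $z$-minimality of $G$.} Since $G/Z(G)$ is assumed minimal, it remains to prove that $G$ itself is minimal. Let $\gamma_1\subseteq\gamma$ be a coarser Hausdorff group topology on $G$. Then $\gamma_1|_{Z(G)}$ is a Hausdorff group topology on $Z(G)$ coarser than $\gamma|_{Z(G)}$, so minimality of $Z(G)$ gives $\gamma_1|_{Z(G)}=\gamma|_{Z(G)}$. As $Z(G)$ is unconditionally closed it is in particular $\gamma_1$-closed, so $\gamma_1/Z(G)$ is a Hausdorff group topology on $G/Z(G)$ coarser than $\gamma/Z(G)$, and minimality of $G/Z(G)$ gives $\gamma_1/Z(G)=\gamma/Z(G)$. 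Merson's Lemma, applied with $H=Z(G)$, now yields $\gamma_1=\gamma$; hence $G$ is minimal, and since $G/Z(G)$ is minimal, $G$ is $z$-minimal. Thus the only genuinely non-routine ingredient in the whole argument is the Hausdorffness check in the first implication (equivalently, access to the fact that the centre of a minimal group is minimal); once that is in place, both directions reduce to bookkeeping and an appeal to Merson's Lemma.
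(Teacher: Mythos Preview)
Your argument is correct and follows the same two-step structure as the paper's proof: the backward direction via Merson's Lemma and unconditional closedness of $Z(G)$ is identical, and for the forward direction the paper simply cites \cite[Proposition~7.2.5]{DPS89} (closed central subgroups of minimal groups are minimal), whereas you reprove that fact by hand with the $V\cdot O$ construction. Your Hausdorffness verification is fine (Hausdorff topological groups are regular, so a base of $\sigma$-closed neighbourhoods exists), so the only difference from the paper is that you unpack a citation rather than invoke it.
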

 \begin{proof}First we assume that $G$ is $z$-minimal. Since minimality is preserved by taking closed central subgroups, $Z(G)$ is minimal.
The minimality of $G/Z(G)$ follows from the definition of $z$-minimal groups.

For the other direction, it suffices to show that $G$ is minimal under the assumption that $Z(G)$ and $G/Z(G)$ are minimal.
This is an immediate corollary of Merson's Lemma (Fact \ref{firMer}) 
as $Z(G)$ is unconditionally closed.  \end{proof}
%

\begin{corollary}
If $G$ is a $z$-minimal group, then  $Z_2(G)$ is precompact. If $G$ is also complete, then $Z_2(G)$ is compact.
\end{corollary}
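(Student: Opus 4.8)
The plan is to deduce this directly from Lemma~\ref{lem:zminiff} together with the Prodanov--Stoyanov theorem. Suppose $G$ is $z$-minimal. The key observation is that the second centre $Z_2(G)$, defined by $Z_2(G)/Z(G) = Z(G/Z(G))$, maps onto a central subgroup of $G/Z(G)$. More precisely, I would first record that $Z_2(G)/Z(G)$ is a closed subgroup of $G/Z(G)$: it is the centre of $G/Z(G)$, hence unconditionally closed, hence closed in the given Hausdorff topology. Since $G$ is $z$-minimal, $G/Z(G)$ is minimal by definition (or by Lemma~\ref{lem:zminiff}), and minimality passes to closed central subgroups by \cite[Proposition 7.2.5]{DPS89}; therefore $Z_2(G)/Z(G)$ is minimal.

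Next I would invoke Lemma~\ref{lem:zminiff} again, this time in the reverse direction applied to the group $Z_2(G)$. Note that $Z(Z_2(G)) = Z_2(G)$ is false in general, but what is true is that $Z_2(G)$ is nilpotent of class at most $2$ and that $Z(G) \subseteq Z(Z_2(G))$. Rather than chase centres of $Z_2(G)$ itself, the cleaner route is: $Z(G)$ is minimal (it is a closed central subgroup of the minimal group $G$), and $Z_2(G)/Z(G)$ is minimal by the previous paragraph. Both $Z(G)$ and $Z_2(G)/Z(G)$ are abelian, hence precompact by Prodanov--Stoyanov \cite{PS}. An extension of a precompact group by a precompact group is precompact, so $Z_2(G)$ is precompact. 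This establishes the first assertion.

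For the second assertion, assume in addition that $G$ is complete. Then $Z_2(G)$, being a closed subgroup of the complete group $G$ (it is unconditionally closed, as the excerpt notes that $n$-centres are unconditionally closed), is itself complete. A complete precompact group is compact, so $Z_2(G)$ is compact, as claimed.

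The main obstacle I anticipate is justifying that $Z_2(G)/Z(G)$ is minimal --- one must be careful that the quotient topology on $G/Z(G)$ restricted to the closed subgroup $Z_2(G)/Z(G)$ is the topology with respect to which we apply Prodanov--Stoyanov, and that ``minimality passes to closed central subgroups'' is applied to $G/Z(G)$ rather than to $G$. The remaining steps (precompactness is a three-space property in the relevant direction, closed subgroups of complete groups are complete, complete precompact equals compact) are standard and require no delicate argument.
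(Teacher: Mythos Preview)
Your proof is correct and follows essentially the same route as the paper: both argue that $Z(G)$ and $Z_2(G)/Z(G)=Z(G/Z(G))$ are minimal abelian (hence precompact by Prodanov--Stoyanov), then use that precompactness is a three-space property, and finally that $Z_2(G)$ is unconditionally closed in the complete group $G$. Your digression about applying Lemma~\ref{lem:zminiff} to $Z_2(G)$ is unnecessary (as you yourself note), but the core argument matches the paper's exactly, only spelled out in more detail.
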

\begin{proof}
Both $Z(G)$ and  $Z(G/Z(G))=Z_2(G)/Z(G)$ are precompact, being minimal abelian.   As precompactness is a three space property (see \cite{BT} for example), it follows that  $Z_2(G)$ is precompact. If $G$ is also complete, then its unconditionally closed subgroup
 $Z_2(G)$ must be also complete, hence, compact.
\end{proof}


\begin{definition}
	Let $H$ be a subgroup of a topological group $G$. Then $H$ is   essential in $G$ if $H\cap L \neq \{e\}$
	for every non-trivial closed normal subgroup $L$ of $G$.
\end{definition}
The following Minimality Criterion of dense subgroups is well-known (for compact $G$ see also \cite{P,S71}).
\begin{fact} \label{Crit} \cite[Minimality Criterion]{B}
	Let $H$ be a dense subgroup of a topological group $G.$  Then $H$ is minimal if and only if $G$ is minimal and $H$ is essential in $G$.
\end{fact}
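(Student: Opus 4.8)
The plan is to run both implications through the Raikov completion, treating $G$, the group $(H,\tau|_H)$, and $H$ under any coarser topology as dense subgroups of one and the same completion, and to use Merson's Lemma (Fact~\ref{firMer}) for the ``$G$ is minimal'' half. The whole point will be that essentiality controls kernels while minimality of the ambient group controls how fine an injected topology can be, and that these two properties are dual to each other.

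First I would prove the forward implication, so assume $(H,\tau|_H)$ is minimal. To get that $(G,\tau)$ is minimal, take any coarser Hausdorff group topology $\tau_1\subseteq\tau$ on $G$; then $\tau_1|_H$ is a Hausdorff group topology on $H$ with $\tau_1|_H\subseteq\tau|_H$, so minimality of $H$ forces $\tau_1|_H=\tau|_H$, and since $H$ is dense in both $(G,\tau)$ and $(G,\tau_1)$ the coset space $G/H$ is indiscrete in both (a non-empty open union of cosets of a dense subgroup is everything), so $\tau_1/H=\tau/H$; Merson's Lemma then gives $\tau_1=\tau$. For essentiality, suppose $L$ is a non-trivial closed normal subgroup of $G$ with $H\cap L=\{e\}$. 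Then $q|_H\colon H\to G/L$ is an injective homomorphism with dense image, so pulling the (Hausdorff) topology of $G/L$ back along it produces a Hausdorff group topology $\sigma\subseteq\tau|_H$ on $H$, whence $\sigma=\tau|_H$ by minimality and $q|_H$ becomes a topological embedding. Now $(H,\tau|_H)$ sits densely in $G$ and, via $q$, densely in $G/L$, so both $G$ and $G/L$ embed densely, and identically on $H$, into the Raikov completion $\widetilde H$ of $(H,\tau|_H)$; consequently the two continuous homomorphisms $G\hookrightarrow\widetilde H$ and $G\xrightarrow{q}G/L\hookrightarrow\widetilde H$ agree on the dense subgroup $H$ and hence coincide, which is absurd because the first is injective and the second has kernel $L\neq\{e\}$.

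Next I would prove the converse: assume $(G,\tau)$ is minimal and $H$ is dense and essential in $G$, and let $\sigma\subseteq\tau|_H$ be any coarser Hausdorff group topology on $H$, with Raikov completion $\iota\colon(H,\sigma)\hookrightarrow\widehat H_\sigma$. Since $\mathrm{id}\colon(H,\tau|_H)\to(H,\sigma)$ is continuous, $\iota$ is continuous from $(H,\tau|_H)$; as $H$ is dense in $(G,\tau)$ and $\widehat H_\sigma$ is complete, $\iota$ extends uniquely to a continuous homomorphism $\widehat\iota\colon(G,\tau)\to\widehat H_\sigma$. Its kernel is a closed normal subgroup of $G$ meeting $H$ trivially (because $\iota$ is injective), so essentiality of $H$ forces $\ker\widehat\iota=\{e\}$; thus $\widehat\iota$ is a continuous injective homomorphism, and pulling the topology of $\widehat H_\sigma$ back along it gives a Hausdorff group topology on $G$ that is $\subseteq\tau$, hence equal to $\tau$ by minimality. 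Therefore $\widehat\iota$ is a topological embedding, so its restriction $\iota$ embeds $(H,\tau|_H)$ topologically into $\widehat H_\sigma$; but $\iota$ also embeds $(H,\sigma)$ topologically into $\widehat H_\sigma$, so the two subspace topologies coincide and $\tau|_H=\sigma$. As $\sigma$ was arbitrary, $H$ is minimal.

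The step I expect to be the main obstacle is making the completion identifications rigorous — that a dense subgroup and the ambient group share a Raikov completion, and that a continuous homomorphism into a complete group extends uniquely — together with the two places where the hypotheses are spent: essentiality to force the extended (respectively quotient) map to be injective, and minimality of $G$ to upgrade a continuous injective homomorphism into the completion to a topological embedding. Keeping track of Hausdorffness throughout, so that $G/L$, the completion $\widehat H_\sigma$, and the various pulled-back topologies are genuinely Hausdorff group topologies, is the routine but necessary bookkeeping.
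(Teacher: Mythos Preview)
The paper does not prove this statement at all: Fact~\ref{Crit} is quoted as a known result with a citation to Banaschewski \cite{B} (and, for the compact case, to \cite{P,S71}), and no argument is given. So there is no ``paper's own proof'' to compare against.

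That said, your proof is correct and is essentially the standard argument one finds in the literature (e.g.\ in \cite{DPS89}). The forward implication is handled cleanly: minimality of $G$ via Merson's Lemma (using that a dense subgroup forces the coset space $G/H$ to carry the indiscrete topology under both $\tau$ and $\tau_1$), and essentiality by a completion argument that produces two continuous homomorphisms $G\to\widetilde H$ agreeing on the dense subgroup $H$ yet having different kernels. The converse is likewise the canonical route: extend the identity $(H,\tau|_H)\to(H,\sigma)\hookrightarrow\widehat H_\sigma$ to all of $G$, kill the kernel with essentiality, and then use minimality of $G$ to turn the resulting continuous injection into an embedding. The bookkeeping you flag at the end (uniform continuity of homomorphisms, uniqueness of extensions into complete Hausdorff groups, Hausdorffness of $G/L$) is indeed routine and causes no trouble.
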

We present here a similar criterion for the $z$-minimality of dense subgroups. This answers a question of G. Luk\' acs.
\begin{theorem}[$z$-Minimality Criterion]\label{thm:zcrit}Let $H$ be a dense subgroup of a topological group $G$. Then $H$ is $z$-minimal if and only if $G$ is $z$-minimal and
	\begin{itemize}
		\item[(a)] $Z(H)$ is a dense essential subgroup of $Z(G)$; and
		\item[(b)] every closed normal subgroup of $G$ properly containing $Z(G)$ contains a non-central element of $H$.
	\end{itemize}
\end{theorem}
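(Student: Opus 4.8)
The plan is to characterize $z$-minimality of $H$ via Lemma~\ref{lem:zminiff}, which reduces it to the minimality of $Z(H)$ and of $H/Z(H)$, and then to translate each of these into a statement about $G$ using the Minimality Criterion (Fact~\ref{Crit}). Since $H$ is dense in $G$, a routine argument shows $Z(H) = H \cap C_G(H) \supseteq H \cap Z(G)$; density of $H$ and the fact that $Z(G)$ is unconditionally closed (hence closed) give that $\overline{Z(H)}$ sits inside $Z(G)$, and one checks $Z(H)$ is dense in $Z(G)$ exactly when $Z(H) = H \cap Z(G)$, i.e.\ when no element of $H$ outside $Z(G)$ centralizes $H$. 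So condition (a) should be read as: $Z(H)$ is dense in $Z(G)$ (an algebraic-topological constraint on how $H$ meets the center) and, once that density holds, $Z(H)$ is minimal iff $Z(G)$ is minimal and $Z(H)$ is essential in $Z(G)$ by Fact~\ref{Crit}.

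Next I would handle $H/Z(H)$. Assuming (a), we have $Z(H) = H \cap Z(G)$, and density of $H$ in $G$ pushes down to density of $H/Z(H) = HZ(G)/Z(G)$ in $G/Z(G)$ (using that $HZ(G)$ is dense in $G$ and the quotient map is open). Applying Fact~\ref{Crit} to this dense subgroup: $H/Z(H)$ is minimal iff $G/Z(G)$ is minimal and $HZ(G)/Z(G)$ is essential in $G/Z(G)$. Now a nontrivial closed normal subgroup of $G/Z(G)$ is exactly $L/Z(G)$ for a closed normal $L \lhd G$ with $L \supsetneq Z(G)$; essentiality says $(HZ(G)/Z(G)) \cap (L/Z(G)) \neq \{e\}$, which unwinds precisely to: $L$ contains an element of $HZ(G)$ not lying in $Z(G)$, equivalently $L$ contains a non-central element of $H$. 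That is condition (b). Combining: $H$ is $z$-minimal iff [$Z(H)$ minimal] and [$H/Z(H)$ minimal] iff [$Z(G)$ minimal and (a)] and [$G/Z(G)$ minimal and (b)], and the first bracket in each pair assembles (via Lemma~\ref{lem:zminiff} again) to "$G$ is $z$-minimal."

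For the converse direction one runs the same equivalences backwards: given $G$ $z$-minimal together with (a) and (b), Fact~\ref{Crit} yields minimality of $Z(H)$ and of $H/Z(H)$, and Lemma~\ref{lem:zminiff} delivers $z$-minimality of $H$. The one subtlety to be careful about is that $Z(H)$ need not equal $H \cap Z(G)$ in general — a dense subgroup can have a larger center than its closure's trace — so the logical role of (a) is twofold: its \emph{density} half forces the identification $Z(H) = H \cap Z(G)$ (equivalently $\overline{Z(H)} = Z(G)$), and only then does the \emph{essentiality} half become the correct minimality condition for $Z(H)$ inside $Z(G)$. I expect the main obstacle to be precisely this bookkeeping around the center: verifying that density of $Z(H)$ in $Z(G)$ is both necessary (it follows from minimality of $Z(H)$, since a minimal dense-in-its-closure group forces its closure, and $\overline{Z(H)}$ is a closed central—hence minimal-permitting—subgroup of $G$, but one must rule out $\overline{Z(H)} \subsetneq Z(G)$ using that $Z(H)$ centralizes the dense subgroup $H$) and sufficient, and that the quotient $H/Z(H)$ genuinely embeds densely in $G/Z(G)$ under hypothesis (a). Once the center is pinned down, the rest is a clean two-fold application of the classical Minimality Criterion.
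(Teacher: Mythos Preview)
Your overall architecture matches the paper's: reduce via Lemma~\ref{lem:zminiff} to the minimality of $Z(H)$ and of $H/Z(H)$, then apply Fact~\ref{Crit} to each inside $Z(G)$ and $G/Z(G)$ respectively. The backward direction is fine. But the forward direction has a genuine gap, and it stems from a misconception about the centre.

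First, the misconception. You write that ``$Z(H)$ need not equal $H\cap Z(G)$ in general'' and that density of $Z(H)$ in $Z(G)$ holds ``exactly when $Z(H)=H\cap Z(G)$''. Both are false. For \emph{any} dense $H\leq G$ one has $Z(H)=H\cap Z(G)$ automatically: if $h\in Z(H)$ then $H\subseteq C_G(h)$, and since $C_G(h)$ is closed and $H$ is dense, $G\subseteq C_G(h)$, so $h\in Z(G)$. (The paper states this at the outset.) What is \emph{not} automatic is the density of $Z(H)=H\cap Z(G)$ in $Z(G)$; a dense subgroup can meet a closed normal subgroup in a non-dense set.

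Second, the gap. In the forward direction you must establish that $Z(H)$ is dense in $Z(G)$, and you propose to extract this from the minimality of $Z(H)$ together with the centralizer argument. That does not work: minimality of $Z(H)$ only yields that $\overline{Z(H)}$ is minimal and $Z(H)$ is essential in $\overline{Z(H)}$, while the centralizer argument only gives $\overline{Z(H)}\subseteq Z(G)$---neither rules out $\overline{Z(H)}\subsetneq Z(G)$. The correct input is the minimality of $H/Z(H)$: it forces the continuous bijection $H/Z(H)\to q(H)\subseteq G/Z(G)$ (subspace topology) to be a homeomorphism, i.e.\ $q\res_H$ is open onto its image. The paper then invokes \cite[Lemma~4.3.2]{DPS89}, which says that for dense $H\leq G$ and a quotient map $q\colon G\to G/N$, openness of $q\res_H$ onto its image forces $H\cap N$ to be dense in $N$. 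With density of $Z(H)$ in $Z(G)$ thus secured, the rest of your outline (essentiality of $Z(H)$ in $Z(G)$ from Fact~\ref{Crit}, and condition~(b) from essentiality of $q(H)$ in $G/Z(G)$) goes through exactly as in the paper.
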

\begin{proof}
	We first assume that $H$ is $z$-minimal.
Then, in particular, $H$ is minimal.
	By  the denseness of $H$ in $G$ and the Minimality Criterion, 
		we get that $G$ is minimal and $Z(H)=Z(G)\cap H$.
	Let $q: G\to G/Z(G)$ be the canonical mapping. Then the kernel of $q\res_H$ is $Z(G)\cap H=Z(H)$.
	The minimality of $H/Z(H)$ implies that the restriction mapping $q\res_H: H\to q(H)$ is open, in particular, $q(H)$ is topologically isomorphic to $H/Z(H)$.
	According to \cite[4.3.2 Lemma]{DPS89}, $Z(H)$ must be dense in $Z(G)$.
	Since $q(H)\cong H/Z(H)$ is a dense minimal subgroup of $G/Z(G)$,  
	the Minimality Criterion   implies that $G/Z(G)$ is minimal (so $G$ is $z$-minimal) and $q(H)$ is essential in $G/Z(G)$.
	The latter implies (b) since for every closed normal subgroup $N$ of $G$ properly containing $Z(G)$, $q(G)$ is non-trivial and hence $q(N)\cap q(H)\neq \{e\}$, or equivalently, $N\cap H\nsubseteq Z(G)$.
	It remains to note that $Z(H)\subseteq Z(G)$, so $N\cap H\nsubseteq Z(H)$.
	To see (a), note that minimality is preserved by taking centres (so $Z(H)$ is minimal) and we have already proved that  $Z(H)$ is dense in $Z(G)$, thus the Minimality Criterion applies.
	
	Now we assume that $G$ is $z$-minimal and (a) and (b) are satisfied.
	Again the centre $Z(G)$ of the minimal group $G$ is minimal.
	By (a) and the Minimality Criterion, 
	$Z(H)$ is minimal.
	Moreover, as $Z(H)$ is dense in $Z(G)$, the image $q(H)$ of $H$ under the canonical mapping $q:G\to G/Z(G)$ is topologically isomorphic to $$H/(H\cap Z(G))=H/Z(H).$$
	It remains to see that $q(H)$ is a minimal group. By the Minimality Criterion, it suffices to show that $q(H)$ is essential in $G/Z(G)$.
	Let $M$ be a closed normal subgroup of $G/Z(G)$ distinct from the trivial group.
	Then $q^{-1}(M)$ is a closed normal subgroup of $G$ properly containing $Z(G)$.
	By (b), there exists $x\in q^{-1}(M)\cap H$ such that $x\notin Z(H)$.
	As $Z(H)=Z(G)\cap H$ and $x\in H$, this implies that $x$ does not belong to $Z(G)$,
	thus, $e\neq q(x)\in M\cap q(H)$.
	In other words, $q(H)$ is essential in $G/Z(G)$ and $H/Z(H)$ is minimal.
	Now apply Lemma \ref{lem:zminiff}.
\end{proof}

\subsection{$z$-Minimality of $\STP(n,\F)$ and $\SL(n,\F)$}
Let $\F$ be a topological field.
Denote by $\SL(n,\F)$  the special linear group  over  $\F$ of degree $n$  equipped with the 
pointwise topology
inherited from $\F^{n^2},$ and by $\STP(n,\F)$ its topological subgroup  consisting of upper triangular matrices. Let $\mathsf{N}:=\UT(n,\F)$ and $\mathsf{A}$ be the subgroups of $\STP(n,\F)$     consisting of upper unitriangular matrices and diagonal matrices, respectively.  Note that $\mathsf{N}$ is normal in $\STP(n,\F)$ and  $\STP(n,\F)\cong \mathsf{N} \rtimes_{\alpha} \mathsf{A}$, where  $\alpha$ is the action by conjugations. It is known that  $\mathsf{N}$ is the derived subgroup of  $\STP(n,\F).$
Recall also that $\SL(n,\F)$ has finite center (e.g., see \cite[3.2.6]{RO} or \cite[page 78]{SU})
\[Z=Z(\SL(n,\F))=\{\lambda I_n:\lambda\in \mu_n\},\] where $\mu_n$ is a finite cyclic group  consisting of the $n$-th roots of unity in $\F$ and $I_n$ is the identity matrix of size $n.$ Moreover, $Z(\STP(n,\F))=Z$ and  $\STP(n,\F)/Z$ is  centre-free (see  \cite[Lemma 1]{SH}).
\vskip 0.3 cm
Clearly, if $G$ is totally minimal, then it is $z$-minimal.
The following lemma provides a case in which also the converse is true.
\begin{lemma}\label{lem:zimptot} Let $\F$ be a topological field. Then
	$\SL(n,\F)$ is   totally minimal if and (only if) it is $z$-minimal.
\end{lemma}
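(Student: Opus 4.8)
The only substantive implication is that $z$-minimality of $G:=\SL(n,\F)$ forces total minimality; the converse is the remark recorded just before the lemma, since every quotient of a totally minimal group is minimal, in particular $G/Z(G)$. Recall that, by definition, $z$-minimality of $G$ means that both $G$ and $G/Z$ are minimal, where $Z=Z(G)=\{\lambda I_n:\lambda\in\mu_n\}$ is finite.

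The plan is first to dispose of the degenerate cases. If $n=1$, or if $\F$ is finite, then $G$ is a finite Hausdorff, hence discrete and compact, group; every Hausdorff quotient of a compact group is compact and therefore minimal, so $G$ is totally minimal and there is nothing more to prove. We may thus assume $n\ge 2$ and $\F$ infinite. Then $G/Z=\PSL(n,\F)$ is a non-abelian simple group, and the classical description of the normal subgroup structure of $\SL(n,\F)$ tells us that the only normal subgroups of $G$ are the subgroups of $Z$ together with $G$ itself; in particular $G/Z$ is center-free.

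The heart of the argument is then a short case analysis over an arbitrary closed normal subgroup $N$ of $G$; since $N$ is closed, $G/N$ is a Hausdorff topological group, and I must show it is minimal. If $N=G$ the quotient is trivial. Otherwise $N\subseteq Z$, so $N$ is a finite central subgroup of $G$. I would first verify that $Z(G/N)=Z/N$: the inclusion $\supseteq$ is immediate, and conversely if $gN\in Z(G/N)$ then $[g,h]\in N\subseteq Z$ for every $h\in G$, hence $gZ\in Z(G/Z)=\{e\}$, so $g\in Z$. Therefore $Z(G/N)=Z/N$ is finite, hence compact, hence minimal, while $(G/N)/Z(G/N)=(G/N)/(Z/N)$ is topologically isomorphic to $G/Z$, which is minimal by $z$-minimality of $G$. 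Lemma \ref{lem:zminiff} now shows that $G/N$ is $z$-minimal, in particular minimal. As $N$ was an arbitrary closed normal subgroup, $G$ is totally minimal, completing the proof.

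I do not anticipate a genuine obstacle: the proof simply combines the classical structure theory of $\SL(n,\F)$ with the elementary Lemma \ref{lem:zminiff}. The points that deserve a moment's care are the identification $Z(G/N)=Z/N$ (which is where center-freeness of $\PSL(n,\F)$, and hence the exclusion of the small cases where $\PSL(n,\F)$ need not be simple, enters) and the observation that the isomorphism $(G/N)/(Z/N)\cong G/Z$ is one of topological groups — this holds because all the subgroups involved are closed, so the canonical quotient maps are open.
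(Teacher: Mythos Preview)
Your proof is correct. The route differs from the paper's, though both hinge on the simplicity of $\PSL(n,\F)$ (equivalently, on the fact that every proper normal subgroup of $\SL(n,\F)$ lies in $Z$). The paper first notes that $G/Z$, being algebraically simple and minimal, is automatically \emph{totally} minimal, and then invokes \cite[Theorem~7.3.1]{DPS89} to lift total minimality from $G/Z$ back to $G$ across the finite normal subgroup $Z$. You instead bypass that external black box: for each closed normal $N\subseteq Z$ you identify $Z(G/N)=Z/N$ (using center-freeness of $G/Z$) and $(G/N)/Z(G/N)\cong G/Z$ topologically, and then apply the paper's own Lemma~\ref{lem:zminiff} to conclude that $G/N$ is minimal. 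Your argument is a touch longer but more self-contained, relying only on Lemma~\ref{lem:zminiff} and the third isomorphism theorem; the paper's argument is shorter and isolates a reusable general principle (total minimality lifts through finite kernels). Your explicit handling of the degenerate cases $n=1$ and finite $\F$ is also a nice bit of care that the paper leaves implicit.
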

\begin{proof}
	Being algebraically simple (see \cite[3.2.9]{RO}), the projective special linear group  $$\PSL(n,\F)= \SL(n,\F)/Z$$ is totally minimal if $\SL(n,\F)$ is  $z$-minimal. Since $Z$ is a finite normal subgroup of $\SL(n,\F)$ it follows from \cite[Theorem 7.3.1]{DPS89} that
	$\SL(n,\F)$ is   totally minimal.
\end{proof}
Recall that  a locally compact non-discrete field is called a \textit{local field}.
\begin{remark}\label{remark:forlocal}
	If $\F$ is a subfield of a local field $P,$ then its
	completion $\widehat\F$  is a topological field that can be identified with the closure of $\F$ in $P$. In case $\F$ is  infinite then  $\widehat\F$ is also a local field, as
	the local field $P$ contains no infinite discrete subfields (see \cite[p.  27]{MA}).
\end{remark}
The following result is an immediate corollary of Theorem 3.17, Theorem 3.19 and Theorem 4.3 of  \cite{MS}.
\begin{corollary}\label{cor:zminmat}
	Let $\F$ be a  local field and $n\in \N.$
	Then \begin{enumerate}
		\item $\SL(n,\F)$ is $z$-minimal (equivalently, totally minimal);
		\item if $\F$ has a characteristic distinct from $2,$ then $\STP(n,\F)$ is $z$-minimal.
	\end{enumerate}
\end{corollary}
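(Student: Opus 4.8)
The plan is to assemble both assertions from the minimality theorems for matrix groups over local fields proved in \cite{MS}, using the structural facts recalled at the start of this section together with Lemmas \ref{lem:zminiff} and \ref{lem:zimptot}. The uniform reduction is the following: for $G\in\{\SL(n,\F),\STP(n,\F)\}$ over a local field $\F$ one has $Z(G)=Z=\{\lambda I_n:\lambda\in\mu_n\}$, which is \emph{finite}; a finite Hausdorff group is compact, hence minimal, so by Lemma \ref{lem:zminiff} the group $G$ is $z$-minimal if and only if $G/Z$ is minimal. Thus everything comes down to quoting minimality of the appropriate central quotient ($\PSL(n,\F)$, respectively $\STP(n,\F)/Z$), together with minimality of $G$ itself.

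For (1), Theorems 3.17 and 3.19 of \cite{MS} give the minimality of $\SL(n,\F)$ over an arbitrary local field $\F$ and --- this is the part that $z$-minimality actually needs --- the minimality of the quotient $\PSL(n,\F)=\SL(n,\F)/Z$. By the reduction above, $\SL(n,\F)$ is $z$-minimal. The parenthetical equivalence with total minimality is then Lemma \ref{lem:zimptot}; alternatively, since $\PSL(n,\F)$ is algebraically simple its only Hausdorff quotients are itself and the trivial group, so a minimal Hausdorff topology on it is automatically totally minimal, and as $Z$ is finite \cite[Theorem 7.3.1]{DPS89} upgrades this to total minimality of $\SL(n,\F)$.

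For (2), Theorem 4.3 of \cite{MS} supplies, under the additional assumption that $\F$ has characteristic distinct from $2$, the minimality of $\STP(n,\F)$ and of its centre-free quotient $\STP(n,\F)/Z$ over a local field $\F$; feeding this into the same reduction (again $Z$ is finite) gives that $\STP(n,\F)$ is $z$-minimal.

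I do not anticipate a genuine obstacle, as the statement is advertised as an immediate corollary. The only real care is bookkeeping: one must match the hypotheses of the cited results (that $\F$ be a local field, and in (2) that its characteristic is not $2$) to the assumptions made here, and --- more importantly --- make sure that what one extracts from \cite{MS} includes the minimality of the central quotients $\PSL(n,\F)$ and $\STP(n,\F)/Z$, and not merely of $\SL(n,\F)$ and $\STP(n,\F)$, since it is precisely the minimality of those quotients that the definition of $z$-minimality requires.
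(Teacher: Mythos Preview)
Your proposal is correct and follows the paper's approach: the paper simply states that the result is an immediate corollary of Theorems 3.17, 3.19 and 4.3 of \cite{MS}, and you have spelled out exactly how those citations assemble into $z$-minimality via Lemma \ref{lem:zminiff}, the finiteness of $Z$, and Lemma \ref{lem:zimptot}. Your explicit caution that one must extract from \cite{MS} the minimality of the central quotients (not merely of the groups themselves) is well placed and is precisely the content that makes the corollary ``immediate''.
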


\begin{proposition}\label{prop:zminifmin}
	Let $\F$ be a  subfield of a local field of characteristic distinct from $2.$  Then $\SL(n,\F)$ is   $z$-minimal if and only if $\STP(n,\F)$ is $z$-minimal.
\end{proposition}
\begin{proof} Clearly, we may assume that $\F$ is infinite and $n\geq 2.$ Assume first that $\STP(n,\F)$ is $z$-minimal and let us show that $\SL(n,\F)$ is   totally minimal. Note that to prove this  implication we do not need to assume anything about the characteristic of $\F.$
	By \cite[Theorem 4.7]{MS}, it suffices to show that $Z(\SL(n,\F))=Z(\SL(n,\widehat\F)).$
	
	In view of the 	$z$-minimality of $\STP(n,\F)$ and
	item (a) of Theorem \ref{thm:zcrit}, we deduce that
	$Z(\STP(n,\F))=Z(\STP(n,\widehat\F))$ since these centres are finite. Moreover, we always have $Z(\STP(n,\F))=Z(\SL(n,\F))$ and $Z(\STP(n,\widehat\F))=Z(\SL(n,\widehat\F))$ so it follows that $Z(\SL(n,\F))=Z(\SL(n,\widehat\F)),$  as needed.
	
	To prove the converse implication, we assume, in addition, that the characteristic of $\F$ is distinct from $2.$ By Corollary \ref{cor:zminmat}, $\STP(n,\widehat\F)$ is $z$-minimal.
	By Lemma \ref{lem:zimptot}, $\SL(n,\F)$ is    totally minimal. Then $Z=Z(\SL(n,\F))=Z(\SL(n,\widehat\F))$ in view of \cite[Theorem 4.7]{MS}. So, we also have $Z=Z(\STP(n,\F))=Z(\STP(n,\widehat\F)).$  Let us assume first that $n\geq 3.$ Observe that by the Minimality Criterion, it suffices to prove that  the dense subgroup $\STP(n,\F)/Z$  is essential in the minimal group $\STP(n,\widehat\F)/Z$.  To this aim, let $L$ be a non-trivial closed normal subgroup of $\STP(n,\widehat\F)/Z.$ Clearly, this means that $q^{-1}(L)$ is a normal subgroup of $\STP(n,\widehat\F)$ which is not central, where   $$q:\STP(n,\widehat\F)\to \STP(n,\widehat\F)/Z$$ is the quotient map. Then, since $n\geq 3$ the normal subgroup $q^{-1}(L)$ must non-trivially intersect  $\UT(n,\widehat{\F}),$  the derived subgroup of $\STP(n,\widehat{\F})$, in view of  \cite[Lemma 2.3]{XDSD}. By \cite[Lemma 2]{SH},  $q^{-1}(L)$ intersects  $\UT(n,\F)$ non-trivially. This implies that  $L$ intersects $\STP(n,\F)/Z$ non-trivially.
	
	Now we prove the  case $n=2$. By Corollary \ref{cor:zminmat}.2, $G:=\STP(2,\widehat\F)$ is $z$-minimal. Hence, to see that $H:=\STP(2, \F)$ is $z$-minimal,  we need  to verify the validity of conditions (a) and (b) of Theorem \ref{thm:zcrit}.
	
	
	
	Condition (a) is satisfied as   $Z(H)=Z(G)=\{\pm I\}.$
	According to the argument in the proof of \cite[Theorem 3.4]{MS}, every   normal subgroup $L$  of $G$  properly containing $Z(G)$ contains  the element $\left(\begin{array}{cc}
		1 & 1 \\
		0 & 1 \\
	\end{array}
	\right);$
	thus (b) is satisfied.
\end{proof}
Recall that $n\in \N$ is \textit{square-free} if $n$ is divisible by no square number other than $1.$
\begin{corollary} \label{cor:sq} Let $n\in \N$  and $\mathbb{F}$ a subfield of a local field.
	If $n$ is 
	square-free,
	then $\SL(n, \mathbb{F})$ is minimal if and only if it is totally minimal. If, in addition, $\F$ has a characteristic distinct from $2,$ then $\STP(n,\F)$ is minimal if and only if it is $z$-minimal.
\end{corollary}
\begin{proof}
	By \cite[Proposition 5.1]{MS}, if $H:=\SL(n, \mathbb{F})$ is minimal, then $Z(H)$ meets non-trivially any non-trivial central subgroup of $G:=\SL(n, \widehat{\mathbb{F}})$.
	On the other hand, $Z(G)$ is isomorphic to the group of solutions of the equation $x^n=1$ in $\widehat{\mathbb{F}}$.
		%
	Since $n$ is square-free, this  centre is a cyclic group of order a product of distinct primes. It follows
	that any subgroup of $Z(G)$ with the announced property can only be $Z(G)$ itself.
	Apply \cite[Theorem 4.7]{MS} to deduce that $H$ is totally minimal. By \cite[Theorem 2]{SH} and Proposition \ref{prop:zminifmin},  if $\F$ has a characteristic distinct from $2,$ then  $\SL(n,\F)$ is  minimal ($z$-minimal)  if and only if $\STP(n,\F)$ is minimal (resp., $z$-minimal). This proves the last assertion.
\end{proof}
\begin{proposition} \label{prop:sfree} Let $n$ be a positive integer. Then the following conditions are equivalent:
	\begin{itemize}
		\item[(a)] $n$ is not square-free;
		\item[(b)] there exists a subfield $\mathbb{F}$ of $\mathbb{C}$ such that $\SL(n, \mathbb{F})$ is minimal but not totally minimal;
		\item [(c)] there exists a subfield $\mathbb{F}$ of $\mathbb{C}$ such that $\STP(n, \mathbb{F})$ is minimal but not $z$-minimal.
	\end{itemize}
\end{proposition}

\begin{proof}
	$(b)\Rightarrow (a)$ Use Corollary \ref{cor:sq}.\\
	$(a)\Rightarrow (b)$
	Now assume that there exists a prime number $p$ with $p^2\mid n$.
	Let $n=pm$, $\alpha=e^{\frac{2\pi i}{n}}$ and $N=\hull{\alpha}$.
	Consider the subfield $\mathbb{F}=\mathbb{Q}(\alpha^p)$ of $\mathbb{C}$.
	Then $N':=\mathbb{F}\cap N=\hull{\alpha^p}$.
	Since the subgroup $N'$ contains the socle of the finite cyclic group $N$, it must non-trivially meet any non-trivial subgroup of $N$.
	By \cite[Proposition 2]{SH}, $\SL(n, \mathbb{F})$ is minimal but not totally minimal.\\
	$(b)\Leftrightarrow(c)$ Use \cite[Theorem 2]{SH}, Proposition \ref{prop:zminifmin} and Lemma \ref{lem:zimptot}, taking into account that the field $\C$ has zero characteristic.
\end{proof}

The next theorem answers \cite[Question 6]{SH} in the positive.
\begin{theorem}\label{thm:minprod}
	Let $\mathcal P$ be the set of all primes. Then  $G=\prod_{p\in \mathcal P}\SL(p+1,(\Q,\tau_p))$ and
	$H=\prod_{p\in \mathcal  P}\STP(p+1,(\Q,\tau_p))$ are both minimal.
\end{theorem}
\begin{proof}
	For every $p\in\mathcal P$ we have $Z(\SL(p+1,\Q_p))=Z(\STP(p+1,\Q_p))\subseteq \{I,-I\}.$ By \cite[Corollary 4.8]{MS}, $\SL(p+1,(\Q,\tau_p))$ is totally minimal. Since $Z(G)$ is compact we deduce from \cite[theorem 4.8]{DHPXX} that $G$ is minimal. By Proposition \ref{prop:zminifmin}, $\STP((p+1,(\Q,\tau_p))$ is $z$-minimal for every prime $p.$ Using \cite[Theorem 4.8]{DHPXX} again and the fact that $Z(H)=Z(G)$, we establish the minimality of	$H$.
\end{proof}

\section{The Birkhoff topology}

Let $H$ be a group of topological automorphisms of a locally compact group $G$. 
Equip $H$ with a topology $\tau$.
It is known 
that the action of $H$ on $G$ is continuous if and only if $\tau$ is finer than the compact-open topology.
However, in general the compact-open topology is not a group topology on $H$.
So, we may consider whether there is a coarsest group topology on $H$ making this action continuous, i.e., 
finer than the compact-open topology.
Fortunately, such a topology exists.
It is called the \emph{Birkhoff topology} (see \cite[p. 260]{DPS89} and \cite[Remark 1.9]{MEG95}) and also the \emph{generalized compact-open topology}.
 This group topology has a local base at the identity formed by the sets
\[\mathcal B(C,U):=\{f\in H| \ f(g)\in Ug,  \ f^{-1}(g)\in Ug  \ \forall g\in C\},\] where
$C$ runs over compact subsets and $U$ runs over neighbourhoods of the identity in $G.$
So, if the compact-open topology makes $H$  a topological group, then the two topologies coincide. 
 In the sequel, a subgroup of $\Aut(G)$ will always carry the Birkhoff topology. When $G$ is a connected Lie group, this topology coincides with the compact-open topology, and makes $\Aut(G)$ 
a Lie group \cite[Theorem 1]{Hoc}.

There is another 
way to obtain this Lie group topology, see \cite{Hoc, HN}.
Let $\varphi: G\to H$ be a quotient homomorphism of connected Lie groups.
We say that $\varphi$ is a {\em covering} if $\ker \varphi$ is discrete. In this case, $G$ is a {\em covering group} of $H.$ If, in  addition, $G$ is simply connected, then it is the {\em universal covering group} of $H$.
The universal covering group is unique, up to a topological isomorphism. We 
 denote the universal covering group of $G$ by $\widetilde{G}$.  Moreover, 
connected Lie groups have isomorphic universal covering groups  if and only if they have isomorphic Lie algebras \cite[Theorem 9.5.13]{HN}.  In particular, if $G$ is a covering group of $H$, then they have isomorphic Lie algebras.

Let $\mathfrak{g}$ be the Lie algebra of $G$, then $\Aut(\mathfrak{g})$, the group of Lie algebra automorphisms, is a closed subgroup of the general linear group $\GL(\mathfrak{g})\cong \GL(n, \mathbb{R})$, where $n$ is the dimension of $G$.
It is also known that, $\Aut(\widetilde{G})$ and $\Aut(\mathfrak{g})$ are isomorphic as groups (see \cite[Corollary 9.5.11]{HN}). 
Every (topological) automorphism $\varphi$ of $G$ admits a ``lifting'' to a unique automorphism $\widetilde{\varphi}$ of $\widetilde{G}$ \cite[Remark 9.55(b)]{HN}. That is, $\widetilde{\varphi}$ makes the following diagram commutative

$$\xymatrix{
  & \widetilde{G} \ar[d]_{q} \ar[r]^{\widetilde{\varphi}}  & \widetilde{G} \ar[d]^q            \\
  & G \ar[r]^{\varphi} & G                       }
 $$
 where $q$ is the covering mapping. So,
the mapping $\varphi \mapsto \widetilde{\varphi}$ gives a group embedding of $\Aut(G)$ into $\Aut(\widetilde{G})$, with image the setwise stabilizer of $N$, where $N$ is the kernel of the covering $\widetilde{G}\to G$.  Since the latter group is isomorphic to $\Aut(\mathfrak{g})$, a Lie subgroup of the linear group $\GL(\mathfrak{g})$, we obtain a Lie group structure on $\Aut(G)$. This topology also coincides with the Birkhoff topology \cite[IX. Theorem 1.2]{Hoch}.

The topological group embedding $\varphi\mapsto \widetilde{\varphi}$  maps $\Inn(G)$ onto $\Inn(\widetilde{G})$.
So, $\Inn(G)$ and $\Inn(\widetilde{G})$ are isomorphic as topological groups.
This implies that connected Lie groups with isomorphic Lie algebras have topologically isomorphic inner automorphism groups.

A locally compact group $G$ is called a {\em (CA) group} if $\Inn(G)$ is closed in $\Aut(G)$. 
If $G$ is a connected Lie group, 
then $\Aut(G)$ is a Lie group by the above argument.
So $\Inn(G)$ is closed in $\Aut(G)$ if and only if $\Inn(G)$ is locally compact.
Therefore, the (CA) property is totally determined by Lie algebras.
In summary, we have:

\begin{fact} \label{InnIso}
Let $G,H$ be connected Lie groups with isomorphic Lie algebras, then $\Inn(G)\cong \Inn(H)$.
In particular, $G$ is (CA) if and only if $H$ is (CA).
\end{fact}
Let $G$ be a locally compact group, then the natural homomorphism $$G\to \Inn(G), g\mapsto I_g$$
is continuous.
Indeed, since the kernel of this mapping is $Z(G)$, one may only check the induced group isomorphism $G/Z(G)\to \Inn(G)$ is continuous.
Clearly, the action $\alpha: (G/Z(G),\tau_q)\times G\to G$ defined by $\alpha(gZ(G),h)=ghg^{-1}$ is continuous, where $\tau_q$ is the quotient topology on $G/Z(G)$.  
Since the Birkhoff topology $\tau_B$ is the coarsest group topology making the natural action of $\Inn(G)$ on $G$ continuous,
we get that $\tau_B\subseteq \tau_q$, which shows that 
 the mapping is continuous.
So, we have the following:

\begin{proposition}\label{prop:birk}
Let $G$ be a locally compact group. If the quotient group $G/Z(G)$ is minimal, then it is topologically isomorphic to
$\Inn(G).$
In particular, if $G$ is a locally compact $z$-minimal group, then $\Inn(G)$ is locally compact minimal. 
\end{proposition}

The Lie algebras of (CA) connected Lie groups are called {\em (CA) Lie algebras}. They were studied also by van Est \cite{EstAlg}.
 The following topological groups are all (CA).
\begin{example}\label{ExCA} 
\begin{itemize}
  \item[(1)] Compact groups and more generally 
    locally compact $z$-minimal groups in view of Proposition \ref{prop:birk};
  \item[(2)]\cite[Theorem 3.3]{Zer} every connected Lie group of dimension $\leq 4$;
  \item[(3)]\cite{EstAlg}   connected nilpotent Lie groups;
  \item[(4)]\cite{EstAlg}  semi-simple connected Lie groups. This is because a connected Lie group is (CA) if and only if its radical (the largest closed solvable connected normal subgroup) is (CA).
\end{itemize}
\end{example}
\begin{proposition}\label{CAiff}
Let $G$ be a connected Lie group, then $G$ is (CA) if and only if $\Inn(G)\cong G/Z(G)$.
\end{proposition}
\begin{proof}
The sufficiency is evident.
To see the converse, suppose that $G$ is (CA).
Then $\Inn(G)$ is a locally compact group.
Since $G$ is connected,
we deduce by \cite[Theorem 3.1.27]{AT}, in view of the  Baire Category Theorem,  that the natural mapping $G\to \Inn(G)$ is open.
So $\Inn(G)\cong G/Z(G)$.
\end{proof}

\begin{proposition}\label{InnProd}
Let $G$, $H$ be locally compact groups.
Then $\Inn(G\times H)\cong \Inn(G)\times \Inn(H)$.
\end{proposition}
\begin{proof}
Clearly, the mapping 
$$\iota: \Inn(G\times H)\to \Inn(G)\times \Inn(H), I_{(g,h)}\mapsto (I_g, I_h)$$
 is an isomorphism of abstract groups.
Now let $K_1, K_2$ be compact subsets of $G$ and $H$, and $U_1, U_2$ be identity neighbourhoods in $G$ and $H$, respectively.
To see that $\iota$ is continuous, it suffices to check that the preimage of 
$\mathcal{B}(K_1, U_1)\times \mathcal{B}(K_2, U_2)$
is open in $\Inn(G\times H)$.
While, this follows from that 
$$\iota^{-1}( \mathcal{B}(K_1, U_1)\times \mathcal{B}(K_2, U_2))=\mathcal{B}(K_1\times K_2, U_1\times U_2)$$
immediately.

Now let us see that $\iota$ is also open.
Let $K\subseteq G\times H$ be compact and $U$ be a neighbourhood of the identity in $G\times H$.
Choose compact subsets $K_1, K_2$ of $G$ and $H$ respectively such that $K\subseteq K_1\times K_2$.
Also take identity neighbourhoods $U_1$ and $U_2$ of $G$ and $H$ respectively with $U_1\times U_2\subseteq U$.
Then we get that $\mathcal{B}(K_1\times K_2, U_1\times U_2)\subseteq \mathcal{B}(K, U)$ and hence
$$\mathcal{B}(K_1, U_1)\times \mathcal{B}(K_2, U_2)=\iota(\mathcal{B}(K_1\times K_2, U_1\times U_2))\subseteq \iota(\mathcal{B}(K, U)).$$
This proves that $\iota$ is open.
\end{proof}

\begin{corollary}
Let $G_1, G_2,...,G_n$ be connected Lie groups. Then $\prod_{i=1}^n G_i$ is (CA) if and only if each $G_i$ is (CA).
\end{corollary}
\begin{proof}
We need only to consider the case $n=2$.
If both $G_1$ and $G_2$ are (CA), then by Proposition \ref{CAiff} and Proposition \ref{InnProd}, 
$$\Inn(G_1\times G_2)\cong \Inn(G_1)\times \Inn(G_2)\cong G_1/Z(G_1)\times G_2/Z(G_2)\cong (G_1\times G_2)/Z(G_1\times G_2).$$
Using Proposition \ref{CAiff} again, we obtain that $G_1\times G_2$ is (CA).

Conversely, if $G_1\times G_2$ is (CA).
Then $$\Inn(G_1)\times \Inn(G_2)\cong \Inn(G_1\times G_2)\cong (G_1\times G_2)/Z(G_1\times G_2)$$ is locally compact.
As a closed subgroup, $\Inn(G_i)$ is also locally compact, so closed in $\Aut(G_i)$, where $i=1,2$.
That is, $G_1$ and $G_2$ are (CA).
\end{proof}

There is a deep connection 
between the (CA) property and the minimality of a connected Lie group as the following theorem by Goto \cite[Theorem]{Goto73} shows.

\begin{fact}
 A connected Lie group is minimal if and only if it is a (CA) group with  compact centre.
\end{fact}

Combining Proposition \ref{CAiff} with Goto's theorem, 
we have:
\begin{proposition}\label{prop:inisq}
Let $G$ be a connected  Lie group.  Then   $G$ is minimal if and only if $Z(G)$ is compact and $\Inn(G)\cong G/Z(G).$
\end{proposition}
Motivated by the definition of $z$-minimality we present the following concept.
\begin{definition}
A  locally compact minimal group $G$ is  \textit{$\Inn$-minimal} if $\Inn(G)$ is minimal.
\end{definition}

Locally compact $z$-minimal groups are necessarily $\Inn$-minimal. By the isomorphism  $\Inn(G)\cong G/Z(G),$ given in 
Proposition \ref{prop:inisq}, we obtain the following corollary.
\begin{corollary}\label{cor:zifinn}
Let $G$ be a connected  Lie group. Then $G$ is $\Inn$-minimal if and only if it is $z$-minimal.
\end{corollary}

The following fact 
	is well-known in 
	 case 
	 $G$ is a Lie group and $N$ is discrete. For 
	 the reader sake we provide a proof for 
	 a more general case.
\begin{fact}
\label{centre}
Let $G$ be a connected topological group and $N$be  a closed normal subgroup. If $N$ is totally disconnected, and $q: G\to G/N$ is the canonical quotient mapping, then
\begin{itemize}
  \item[(1)] $N$ is central;
  \item[(2)] $q(Z(G))=Z(G/N)$ and $q^{-1}(Z(G/N))=Z(G)$.
\end{itemize}
\end{fact}
\begin{proof}
(1) follows from the second equation in (2).

Now let us check (2).
 The first equation is also a consequence of the second.
Fix $x\in q^{-1}(Z(G/N))$, then for any $g\in G$, $q(x)$ commutes with $q(g)$.
Hence we have that $gxg^{-1}x^{-1}\in \ker q=N$.
Thus we obtain a continuous mapping 
$$\psi_x: G\to N, g\mapsto gxg^{-1}x^{-1}.$$
As $G$ is connected while $N$ is totally disconnected, $\psi_x(G)$ must be a singleton.
Since $1=\psi_x(x)\in \psi_x(G)$, we have that $\psi_x(G)$ is exactly the trivial group.
Thus, $x\in Z(G)$.
So we have that $q^{-1}(Z(G/N))\subseteq Z(G)$.
The inclusion in the other direction is evident.
\end{proof}

\begin{proposition}\cite[Proposition 8]{Goto55}\label{Prop:Goto55}
Let $G$ and $H$ be connected locally compact groups, and let $\varphi: G\to H$ be a continuous monomorphism with dense image. Then
\begin{itemize}
  \item[(1)] $\varphi(G)$ is normal in 
  $H.$
  \item[(2)] The action $\sigma: H\times G\to G$ defined by $\sigma(h,g):=\varphi^{-1}(h^{-1}\varphi(g)h)$; $h\in H, g\in G$ is continuous.
\end{itemize}
\end{proposition}

\begin{remark}\label{Re:action}
Let $N$ be the kernel of the action in Proposition \ref{Prop:Goto55}(2). Then the induced faithful action of $H/N$ on $G$ is continuous.
Recall that the coarsest group topology on a subgroup of $\Aut(G)$ making the action on $G$ continuous    is the Birkhoff topology.
So the natural injective mapping $H/N\to \Aut(G)$ is continuous.
Thus, we obtain a continuous homomorphism $\sigma'$ of $H$ into $\Aut(G)$ mapping $h$ to $\sigma(h,-)$.
\end{remark}
\begin{theorem}\cite[Main Structure Theorem]{Zer}\label{nonCA}
Let $G$ be a connected Lie group which is not (CA). Then there exist a (CA) connected Lie group $M$, a torus $T$
 in $\Aut(M)$, a dense subgroup $V$ of $T$, and a continuous homomorphism
$\varphi: G\to M\rtimes T$ such that
\begin{itemize}
  \item [(i)] $H:=M\rtimes T$ 
%
is (CA);
  \item [(ii)] $\varphi$ is injective and its image is $M\rtimes V$;
  \item [(iii)] $\varphi(Z(G))$ is contained in $M$ and $\pi(Z(H))$ is finite, where $\pi$ is the projection of $H$ onto $T$.
\item[(iv)]
\cite[Lemma 2.3]{Zer} $\sigma'(H)$ coincides with the closure of $\Inn(G)$ in $\Aut(G)$, where $\sigma'$ is defined as in Remark \ref{Re:action}.
\end{itemize}
\end{theorem}

\begin{proposition}\label{InnMin}
Let $G$ be a centre-free connected Lie group. Then $\Inn(G)$ is minimal.
\end{proposition}

\begin{proof}
If $G$ is (CA), then it is minimal by Goto's theorem. Moreover, $G\to \Inn(G)$ is bijective.
So $\Inn(G)\cong G$ is minimal.

Now assume that $G$ is not (CA).
Let $H, \varphi, M, T, V$ be as in Theorem \ref{nonCA}.
Since $G$ is centre-free and $\varphi$ is injective it follows that $\varphi(G)$ is centre-free. As $M\subseteq \varphi(G),$  one obtains that $Z(H)\cap M$ is trivial.
So by Theorem \ref{nonCA} (iii), $Z(H)$ is finite.

Let $\psi: H\to H/Z(H)$  be the canonical mapping, which is a covering, $H'=\psi(H)$ and $M'=\psi(M)$ (here we consider $M$ as a closed subgroup of $H$ in the natural way).
Then $H'$ is centre-free by Fact \ref{centre}, and (CA) since $H$ and $H'$ have isomorphic Lie algebras, and hence is minimal.
Let $G'=\psi(\varphi(G))$. Then $G'$ is a dense subgroup of $H'$ containing $M'$.
We first show that $G'$ is minimal.
By the Minimality Criterion, it suffices to show that $G'\cap N'\neq \{1\}$ for any non-trivial closed normal subgroup $N'$ of $H'$.
Since $M'\subseteq G'$, we only need to consider 
the case when  $N'$ trivially meets $M'$.
Set $N=\psi^{-1}(N')$.
As $M'\cap N'=\{1\}$ and $M\cap Z(H)=\{1\},$  we deduce that $M\cap N=\{1\}$.
Now, suppose that  $(x,t)\in N\trianglelefteq M\rtimes T$. Then for any $s\in T$,
\begin{equation}\label{e1}(s(x), t)=(1,s)(x,t)(1, s^{-1})\in N.\end{equation}
Hence we have
$$(t^{-1}(x^{-1})t^{-1}(s(x)),1)=(t^{-1}(x^{-1}), t^{-1})(s(x), t)=(x, t)^{-1}(s(x), t)\in N.$$
On the other hand, $(t^{-1}(x^{-1})t^{-1}(s(x)),1)$ is an element of $M\times\{1\}$ (which has been already identified with $M$).
So $(t^{-1}(x^{-1})t^{-1}(s(x)),1)\in M\cap N$ can only be the trivial element, that is, 
$$1=t^{-1}(x^{-1})t^{-1}(s(x))=t^{-1}(x^{-1}s(x)).$$
So $s(x)=x$.
In view of Equation (\ref{e1}), we see that $(x,t)$ commutes with $(1,s)$ for arbitrary $s\in T$.
Since $N$ and $M$ normalize each other, for any $(y, 1)\in M$, 
$$(x, t)(y, 1)(x, t)^{-1}(y, 1)^{-1}\in M\cap N=\{1\}.$$
 That is, $(x, t)$ also commutes with every element of $M$.
Thus, $N$ is in the centre of $H=M\rtimes T$, and hence $N'$ is trivial.
This means 
 that the only closed normal subgroup of $H'$ having trivial intersection with $M'$ is the trivial group.
So $G'$ is essential in $H'$ and hence is minimal.

Now $\mu:=\psi\circ \varphi$ is a continuous homomorphism between the two connected Lie groups $G$ and $H'$ with a dense image.
From Proposition \ref{Prop:Goto55} and Remark \ref{Re:action}, we see that the mapping $\sigma^*: H'\to \Aut(G), h\mapsto \sigma(h, -)$ gives a continuous homomorphism, where $\sigma(h, g)=\mu^{-1}(h^{-1}\mu(g)h)$ for any $g\in G$.
Let $h$ be in the kernel of $\sigma^*$.
Then $h$ commutes with any element of $\mu(G)=G'$.
By the denseness of $G'$ in $H'$, we obtain that $h$ is in the centre of $H'$.
while $H'$ is centre-free. So, $h$ is trivial and $\sigma^*$ is injective.
Now, using the minimality of $H'$, one obtains that $\sigma^*$ is  a topological group embedding.
It follows that $\Inn(G)=\sigma^*(G')$ is also minimal.
\end{proof}

\begin{theorem}\label{Th:Min}Let $G$ be a connected Lie group. If $Z(G)$ is discrete, then $\Inn(G)$ is minimal.
\end{theorem}
\begin{proof}
The group $H:=G/Z(G)$ is centre-free, by Fact \ref{centre}. 
So Proposition \ref{InnMin} yields that $\Inn(H)$ is minimal.
Now, by Fact \ref{InnIso},  we have that $\Inn(G)\cong \Inn(H)$ is minimal.
\end{proof}
Recall that a connected Lie group is called {\em semi-simple} if its Lie algebra is semi-simple, i.e., a direct product of (finitely many) simple Lie algebras.
Evidently, the centre of a semi-simple Lie algebra is trivial.
It follows from   \cite[Proposition 9.23]{HM}  that the centre of a semi-simple connected Lie group is discrete.
So we have:
\begin{corollary}\label{semisimple}
For every semi-simple connected Lie group $G$, $\Inn(G)$ is minimal.
\end{corollary}
\begin{remark}
By \cite{RS},  a connected semi-simple 
Lie group  is totally minimal if and only if its centre is finite. In particular, if $G$ is a a connected semi-simple  Lie group with finite centre, then $G$ is $z$-minimal. Hence, $\Inn(G)$ is minimal by Proposition \ref{prop:birk}. Corollary \ref{semisimple} shows that the same holds 
even without assuming that the centre is finite.
\end{remark}
The following example shows that not every centre-free connected Lie group is minimal.
\begin{example}\label{ex:solv}
Let $N=\mathbb{C}\times \mathbb{C}$ and $T=\mathbb{T}\times \mathbb{T}$, where $\mathbb{C}$ is the field of complex numbers and $\mathbb{T}$ is the unit circle in the complex plane.
Consider the following continuous action
\begin{equation}\label{e3}T\times N\to N: ((e^{2\pi ir}, e^{2\pi is}), (z_1, z_2))\mapsto (e^{2\pi ir} z_1, e^{2\pi is}z_2).\end{equation}
Fixing an irrational number $h\in \mathbb{R}$, we can define another continuous action: 
\begin{equation}\label{e2}\mathbb{R}\times N\to N: (x, (z_1, z_2))\mapsto(e^{2\pi ix}z_1, e^{2\pi ihx}z_2).\end{equation}
Using the two actions, we get connected Lie groups $G=N\rtimes \mathbb{R}$ and $H=N\rtimes T$.
Now let us see that $G$ is centre-free. If $((z_1,z_2), x)\in Z(G)$, then $(z_1, z_2)$ should be fixed by any $r\in \mathbb{R}$.
However,  this happens only when $z_1=z_2=0$. 
On the other hand, the kernel of the action is trivial since $h$ is irrational. So the only central element of $G$ contained in $\{(0,0)\}\times \mathbb{R}$ is the trivial element.
Thus, $G$ is centre-free.
Moreover, $G$  is not minimal: the mapping
$$\varphi: G\to H, ((z_1, z_2),x)\mapsto ((z_1, z_2), (e^{2\pi i x}, e^{2\pi i hx}))$$
is continuous monomorphism but not open onto its image.
\end{example}
The following corollary provides another instance in which $z$-minimality and minimality coincide. 
\begin{corollary}
 Let $G$ be a connected Lie group with finite centre. Then $G$ is minimal if and only if it is  $z$-minimal. In particular, the arbitrary product of minimal connected Lie groups with finite centre is minimal. 
\end{corollary}
\begin{proof}
If $Z(G)$ is finite, then it is both compact and discrete. By Theorem \ref{Th:Min}, $ \Inn(G)$ is minimal. If $G$ is minimal, then by Proposition \ref{prop:inisq},
$G/Z(G)$ is minimal. By Lemma \ref{lem:zminiff},  $G$ is $z$-minimal.  The last assertion follows from the fact that the product of complete $z$-minimal groups is minimal.
\end{proof}
 We now give a necessary condition for the minimality of $G/Z(G)$, where $G$ is a connected locally compact group.
\begin{proposition}\label{Centfree}
Let $G$ be a connected locally compact group such that $G/Z(G)$ is minimal. Then $G/Z(G)$ is centre-free.
\end{proposition}
\begin{proof}
We first assume that $Z(G)$ is connected and compact.
Let $H=G/Z(G)$ and $q: G\to H$ be the canonical mapping. Then, by the minimality of $H$, we obtain that $Z(H)$ is compact.
By \cite[Theorem 13]{Iwa}, the compact subgroup $Z(H)$ of the connected locally compact group $H$ is contained in a connected compact subgroup $P$, 
which can be chosen to be solvable  by \cite[Theorem 15]{Iwa}.
Let $K=q^{-1}(P)$, then $K$ is a compact connected solvable group.
This implies that $K$ is abelian, since compact connected solvable groups are abelian (see, for example, \cite[Lemma 2.2]{Iwa}).
Note that $Z_2(G)=q^{-1}(Z(H))$ is a normal subgroup of $G$ contained in $K$; so it is abelian.
According to a theorem of Iwasawa \cite[Theorem 4]{Iwa}, compact normal abelian subgroups of a connected topological group must be central.
So, $Z_2(G)\subseteq Z(G)$, that is, $Z(H)=q(Z_2(G))\subseteq q(Z(G))=\{1\}$ and we are done.

Now, let us consider the general case.
Since $Z(G)$ is a  locally compact abelian group, it has a closed totally disconnected subgroup $D$ such that $Z(G)/D$ is connected and compact, see \cite[Proposition 5.43]{HM}.
If $G/Z(G)$ is minimal, then 
 its isomorphic copy $(G/D)/(Z(G)/D)$  is minimal as well, in view of the  \textit{third isomorphism theorem for topological groups}
 (see \cite[Theorem 1.5.18]{AT} and \cite[Proposition 3.6]{It}).
Note that, by Fact \ref{centre} (2), $Z(G/D)=Z(G)/D$.
So by the above case, $(G/D)/Z(G/D)$ is centre-free.
It follows then from
$$G/Z(G)\cong (G/D)/(Z(G)/D)=(G/D)/Z(G/D)$$
that $G/Z(G)$ is also centre-free.
\end{proof}

By Proposition \ref{prop:birk}, if $G/Z(G)$ is minimal, then so is $\Inn(G)$. We will see that, if $G$ is also a Lie group, the necessary condition for the minimality of $G/Z(G)$ becomes a necessary condition for the minimality of  $\Inn(G).$
\begin{theorem}\label{Th:Main2}
Let  $G$ be  a connected Lie group. If $\Inn(G)$ is minimal, then  it is centre-free.
\end{theorem}
\begin{proof}
If $G$ is (CA), then by Proposition \ref{CAiff}, $G/Z(G)\cong\Inn(G)$ is minimal.
Then Proposition \ref{Centfree} can be applied.

Now consider the case when $G$ is not (CA).
 Let $H, \varphi, M, T, V, \sigma'$ be as in Theorem \ref{nonCA}.
Since $\sigma'(H)$ contains $\Inn(G)$ as a dense subgroup and $\Inn(G)$ is minimal, $\sigma'(H)$ must be minimal as well.
By (iv) of Theorem \ref{nonCA}, $\sigma'(H)$ is a closed, hence locally compact subgroup, of $\Aut(G)$.
Then, $\sigma'$ is open onto its image, again by \cite[Theorem 3.1.27]{AT}.
Thus, $\sigma'(H)\cong H/Z(H)$, because $Z(H)$ is the kernel of $\sigma'$: an element $h\in \ker \sigma'$ if and only if it commutes with any $g\in \varphi(G)$, this is equivalent to say that $h$ commutes with every element in $H$ because $\varphi(G)$ is dense in $H$.
So, by Proposition \ref{Centfree}, $\sigma'(H)\cong H/Z(H)$ is centre-free.
Then $\Inn(G)$ is also centre-free.
Indeed, if $x\in \Inn(G)$ commutes with every element in $\Inn(G)$, then it commutes with every element in $\sigma'(H)$ as well, since $\Inn(G)$ is dense in $\sigma'(H)$.
So $x$ can only be the identity.
%
\end{proof}

\begin{corollary}\label{nil}
Let $G$ be a  connected locally compact nilpotent group. Then the following conditions are equivalent:
\begin{itemize}
  \item[(1)] $G$ is abelian;
  \item[(2)] $G/Z(G)$ is minimal.
\end{itemize}
Moreover, if $G$ is also a Lie group, then they are equivalent to
\begin{itemize}
  \item[(3)] $\Inn(G)$ is minimal
\end{itemize}
as well.
\end{corollary}

\begin{proof}
The implication (1) $\Rightarrow$ (2) is trivial.
Let us consider the other direction.
If $G/Z(G)$ is minimal, then by Proposition \ref{Centfree}, $G/Z(G)$ is centre-free.
Since a centre-free nilpotent group can only be the trivial group, $G=Z(G)$ is abelian.

When $G$ is additionally assumed to be a Lie group, we have that $\Inn(G)\cong G/Z(G)$ by Example \ref{ExCA} (3) and Proposition \ref{CAiff}.
So (2) and (3) are equivalent.
\end{proof}

\begin{corollary}\label{cor:ofnilp}
A connected locally compact nilpotent  group is $z$-minimal only if it is compact and abelian.
\end{corollary}
In contrast, as  the following example  shows, there exists a totally minimal (so $z$-minimal, in particular) connected solvable  Lie group that is neither compact nor abelian.  
\begin{example}\cite[Example 3.13]{XS}
By	Mayer \cite[Examples 2.6(i)]{Mayer}, the Euclidean motion group $\R^n \rtimes \SO(n,\R)$ is totally minimal for every $n>1$, where   the special orthogonal group $\SO(n,\R)$ acts on $\R^n$ by matrix multiplication.
	Fixing $n=2$ one obtains  the  Lie group of orientation-preserving isometries of the complex plane \[G:=\ISO_{+}(\C)= \Bigg\{\left(\begin{array}{cc}
		a & b \\
		0 & 1 \\
	\end{array}\right)\bigg | \ \ a \in \T, b\in \C \Bigg\}\cong \C \rtimes \T,\]  where $\T$ acts on $\C$ by multiplication. Note that the total minimality of the non-compact connected metabelian Lie group $G$ can be established in a different way by  Banakh \cite[Theorem 13]{Banakh1}.
\end{example}
\begin{lemma}\label{lem:coniso}
Let $G$ be a locally compact group and $q:G\to G/Z(G)$ be the quotient map. If $G/Z(G)$ is centre-free, then the  map $$\psi: \Inn(G)\to \Inn(G/Z(G)), I_g\mapsto I_{q(g)}$$ is a continuous group isomorphism.
\end{lemma}
\begin{proof}
It is easy to see that if  $G/Z(G)$ is centre-free, then $\psi$ is a group isomorphism. Let us prove the continuity of $\psi$. Consider the action $$\tilde{\alpha}:\Inn(G)\times G/Z(G)\to G/Z(G)$$ defined by      $$\tilde{\alpha}(I_g,q(h))=(q\circ\alpha)(I_g,h) \  \forall (g,h)\in G\times G,$$ where 
$\alpha$ is the natural continuous action of $\Inn(G)$ on $G.$   We show that $\tilde{\alpha}$ is continuos at an arbitrary point $(I_g,q(h)).$ Let $U$ be a neighbourhood of $$\tilde{\alpha}(I_g,q(h))=(q\circ\alpha)(I_g,h).$$ By the continuity of $q$ and $\alpha,$ there exist a neighbourhood $V$ of $I_g$ and a neighbourhood $W$ of $h$ such that
$$\tilde{\alpha}(V\times q(W))=(q\circ\alpha)(V\times W)\subseteq U.$$ This proves the continuity of $\tilde{\alpha},$ as $q(W)$ is a neighbourhood of $q(h).$ Now observe that $\tilde{\alpha}(I_g,q(h))=\beta(I_{q(g)},q(h) )$, where $\beta$ is the natural  action of $\Inn(G/Z(G))$ on $G/Z(G).$ Since the Birkhoff topology is the coarsest group topology on $\Inn(G/Z(G))$ for which $\beta$ is continuous we deduce that $\psi$ is continuous.
\end{proof}
Now we are ready to prove our main result.
\begin{theorem}\label{thm:curmain}
Let    $G$  be a connected Lie group. Then $\Inn(G)$ is minimal if and only if it is centre-free and topologically isomorphic to  $\Inn(G/Z(G)).$ 
\end{theorem}
\begin{proof}
Assume first that  $\Inn(G)$  is centre-free and  $\Inn(G)\cong\Inn(G/Z(G)).$ Then, $G/Z(G)$ is a centre-free connected Lie group. By Proposition \ref{InnMin},
$\Inn(G/Z(G))$ is minimal. It follows that its isomorphic copy $\Inn(G)$ is minimal as well.

Now suppose that $G$ is a connected Lie group such that $\Inn(G)$ is minimal. By  Theorem \ref{Th:Main2},  $\Inn(G)$ is centre-free. By Lemma \ref{lem:coniso}, there exists a continuous group isomorphism $\psi: \Inn(G)\to \Inn(G/Z(G)).$ As $\Inn(G)$ is minimal, $\psi$ is, in fact, a topological group isomorphism.
\end{proof}
Similarly, we have:
\begin{theorem}\label{thm: minofgzg}
Let $G$ be a connected Lie group. Then 	$G/Z(G)$ is minimal if and only if it is centre-free and topologically isomorphic to $\Inn(G/Z(G)).$ 
\end{theorem}
\begin{proof}
Assume first that $G/Z(G)$  is centre-free and  $G/Z(G)\cong\Inn(G/Z(G)).$  By Proposition \ref{InnMin},
$\Inn(G/Z(G))$ and its isomorphic copy $G/Z(G)$ are minimal.

Now suppose that $G$ is a connected Lie group such that $G/Z(G)$ is minimal. By  Proposition \ref{Centfree},  $G/Z(G)$ is centre-free. This and Proposition \ref{prop:birk}  imply that there exists a continuous group isomorphism $\phi: G/Z(G)\to \Inn(G/Z(G)).$ As $ G/Z(G)$ is minimal, $\phi$ is, in fact, a topological group isomorphism.
\end{proof}
As a corollary we obtain a criterion for the $z$-minimality of a connected Lie group.
\begin{corollary}
Let $G$ be a connected Lie group. Then the following conditions are equivalent:
\ben \item $G$ is $z$-minimal;
\item $G$ is $\Inn$-minimal;
\item \ben \item $Z(G)$ is compact and \item $G/Z(G)$ is centre-free   and topologically isomorphic to $\Inn(G/Z(G)).$ \een
\een
\end{corollary}
\begin{proof}
$(1)\Leftrightarrow (2)$ Use Corollary \ref{cor:zifinn}.\\
$(1)\Leftrightarrow (3)$ By Lemma \ref{lem:zminiff}, $G$ is $z$-minimal if and only if  $Z(G)$ is compact and $G/Z(G)$ is minimal. Now use Theorem \ref{thm: minofgzg}.
\end{proof}


\section{Open questions and concluding remarks}

\begin{question}
Is the condition that $\Inn(G)\cong \Inn(G/Z(G))$ in Theorem \ref{thm:curmain} necessary? In other words, can we obtain this isomorphism from the condition that $G/Z(G)$ is centre-free?
\end{question}

It suffices to check if the continuous group isomorphism $\psi: \Inn(G)\to \Inn(G/Z(G)),$ defined in Lemma \ref{lem:coniso}, is open.

 The following question can be viewed as  a particular case of \cite[Question 7.1]{DHPXX}.
\begin{question}
Is an infinite product of locally compact $z$-minimal groups $z$-minimal?
\end{question}

	In view of Proposition \ref{prop:birk}, it is also natural to ask:
\begin{question}
Let $G$ be a locally compact group for which  $\Inn(G)$ is locally compact minimal. Is $\Inn(G)^\kappa$  minimal for every infinite cardinal $\kappa$?
\end{question}
When $G$ is a connected Lie group, the answer is ``yes''. This is because if $\Inn(G)$ is minimal, then it is centre-free, by Theorem \ref{Th:Main2} 
while arbitrary products of  centre-free minimal groups are minimal.

If $G$ is a locally compact (not necessarily Lie) connected group such that $G/Z(G)$ is minimal, then $G/Z(G)$ is centre-free by Proposition \ref{Centfree}  and the topological isomorphism $G/Z(G)\cong\Inn(G/Z(G))$ holds true. So, one may ask if Theorem \ref{thm: minofgzg} holds for a locally compact connected group  even without assuming that it is a Lie group.
\begin{question}
Let $G$ be a connected locally compact group such that $G/Z(G)$  is centre-free and topologically isomorphic to $\Inn(G/Z(G)).$  Must $G/Z(G)$ be minimal?
\end{question}

Recall that a complete minimal abelian group must be compact while there exist locally compact  minimal nilpotent groups that are not compact.
Indeed, using \cite[Theorem 2.11]{MEG95} one can construct many non-compact    minimal  (two-step) nilpotent locally compact groups.
\begin{question}
Let $G$ be a two-step nilpotent locally compact minimal group. Is $G^\kappa$ minimal for every infinite cardinal $\kappa$?
\end{question}


%

 It is still open whether the Fermat and  Mersenne numbers are square-free (see \cite{G}). The following result follows from Corollary \ref{cor:sq}.
 \begin{corollary}\label{coro-square1}
 	Let $\mathbb{F}$ be a subfield of a local field,  $F_n=2^{2^n}+1$  and $M_p=2^p-1$ be a Fermat number and  a Mersenne number,   respectively, where $n\in \N$ and $p$ is a prime number.
 	\ben
 	
 	\item  If $\SL(F_n, \mathbb{F})$ is minimal but not totally minimal, then $F_n$ is not square-free.
 	\item If $\SL(M_p, \mathbb{F})$ is minimal but not totally minimal, then $M_p$ is not square-free.
 	\een
 \end{corollary}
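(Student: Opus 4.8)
The plan is to invoke Proposition \ref{prop:sq} in its contrapositive form. That proposition states that for every positive integer and every subfield $\F$ of a local field, \emph{square-freeness} of the integer forces minimality of the corresponding $\SL$ to coincide with total minimality. Reading this contrapositively: if $\SL(n,\F)$ turns out to be minimal but fails to be totally minimal, then the integer $n$ cannot be square-free. Both hypotheses of Proposition \ref{prop:sq} are already in force here, since $\F$ is assumed to be a subfield of a local field and there is no restriction on the integer beyond positivity.

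For item (1) I would apply Proposition \ref{prop:sq} with its generic positive integer instantiated as the Fermat number $F_n = 2^{2^n}+1$: the assumption that $\SL(F_n,\F)$ is minimal but not totally minimal contradicts the conclusion of the proposition unless $F_n$ is not square-free, which is exactly the desired statement. For item (2) the identical argument, now with the generic integer instantiated as the Mersenne number $M_p = 2^p - 1$, yields that $M_p$ is not square-free. No additional computation is required; in particular the prime exponent $p$ plays no role beyond guaranteeing that $M_p$ is a bona fide Mersenne number.

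One sentence of context is more to the point than any proof step: it is a well-known open problem (see \cite{G}) whether every Fermat number, respectively every Mersenne number with prime exponent, is square-free, so the corollary simply reinterprets that arithmetic uncertainty as a statement about the gap between minimality and total minimality for the associated special linear groups. Consequently there is no genuine obstacle in the argument --- the entire mathematical content resides in Proposition \ref{prop:sq} (and, through it, in \cite[Proposition 5.1, Theorem 4.7]{MS}), and the present statement is only its specialization to these two named families of integers.
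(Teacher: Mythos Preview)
Your argument is correct and matches the paper's own treatment: the corollary is stated as an immediate consequence of Proposition~\ref{prop:sq}, and you have spelled out precisely the contrapositive application of that proposition to the integers $F_n$ and $M_p$. Nothing further is needed.
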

%

	 \vskip 0.3cm
\noindent \textbf{Acknowledgment.}
We thank G. Luk\' acs and M. Megrelishvili for their useful suggestions.
The first listed author acknowledges the support of NSFC grants No. 12301089 and 12271258, and the Natural Science Foundation of the Jiangsu Higher Education Institutions of China (Grant No. 23KJB110017).


\begin{thebibliography}{10}
%
	\bibitem{AT}
	A. Arhangel'skii, M. Tkachenko, \emph{Topological groups and related structures}, Atlantis Studies in Math. Ed.: J van Mill, World Scientific, 2008.
	
	
	\bibitem{Banakh1} T. Banakh, \emph{Categorically closed topological groups}, Axioms \textbf{6}:3  (2017), 23.
	
	\bibitem{B} B. Banaschewski, \emph{Minimal topological algebras}, Math. Ann. \textbf{211} (1974), 107--114.
	
	\bibitem{BT} M. Bruguera and M. Tkachenko, \emph{The three space problem in topological groups}, Topology Appl. \textbf{153} (2006), 2278--2302.
		\bibitem{DS79} S. Dierolf, U. Schwanengel, \emph{Examples of locally compact non-compact minimal topological groups,} Pacific J. Math. \textbf{82} (1979),  349--355.
	\bibitem{DHPXX} D. Dikranjan, W. He, D. Peng, W. Xi, Z. Xiao, \emph{Products of locally minimal groups,} Topol.
	Appl. \textbf{329} (2023), https://doi.org/10.1016/j.topol.2022.108368.
				\bibitem{DM14}	D. Dikranjan,  M.  Megrelishvili, \emph{Minimality Conditions in Topological Groups,}
			in: Recent Progress in General Topology III, 229--327, K.P. Hart, J. van Mill,
			P. Simon (Eds.), Springer, Atlantis Press, 2014.
		
	
	
	\bibitem{DP} D. Dikranjan,
	Iv. Prodanov, \emph{Totally minimal topological groups}, Annuaire Univ. Sofia Fat. Math.
	M\'{e}c. \textbf{69} (1974/75), 5--11.
	
	
	\bibitem{DPS89} D. Dikranjan, Iv. Prodanov,  L. Stoyanov, \emph{Topological groups: characters, dualities and minimal group
		topologies,} Pure and Appl. Math. \textbf{130}, Marcel Dekker, New York-Basel, 1989.
	
	\bibitem{Doitch} D. Do\"{\i}tchinov, \emph{Produits de groupes topologiques minimaux}, Bull. Sci. Math.  \textbf{97}(2)  (1972), 59--64.

	\bibitem{Est} W.T. van Est, \emph{Dense imbeddings of Lie groups}, Indag. Math. \textbf{13} (1951),  321--328.

    \bibitem{EstAlg} W.T. van Est, \emph{Some theorems on (CA) Lie algebras. I, II,} Indag. Math. \textbf{14} (1952), 546--568.


    \bibitem{Goto55} M. Goto, \emph{Dense imbeddings of locally compact connected groups}, Ann. of Math. \textbf{61}(1) (1955), 154--169.
    

    \bibitem{Goto73} M. Goto, \emph{Absolutely closed Lie groups}, Math. Ann. \textbf{204} (1973), 337--341.
	
	\bibitem{G} R.K Guy, \emph{Unsolved Problems in Number Theory}, Problem Books in Math, 3rd ed. New York: Springer-Verlag, 2004.
	
     \bibitem{Hoc} G. Hochschild, \emph{The automorphism group of Lie group}, Trans. Amer. Math. Soc. \textbf{72} (1952), 209--216.
    
    \bibitem{Hoch} G. Hochschild, \emph{The structure of Lie groups}, Holden-Day, Inc., San Francisco--London--Amsterdam, 1965.
	
   \bibitem{HM} K.H. Hofmann and S.A. Morris, \emph{The Lie theory of connected pro-Lie groups}, European Mathematical Society (EMS), Zürich, 2007. 
    
     \bibitem{HN} J. Hilgert and K.-H. Neeb, \emph{Structure and geometry of Lie groups}, Springer Monogr. Math., New York, 2012.
     
 
     
	\bibitem{It}
	K.B. Iturbe, \emph{Topological groups}, Final degree dissertation, Universidad del Pais Vasco, 2015.

    \bibitem{Iwa} K. Iwasawa, \emph{On some types of topological groups}, Ann. of Math., \textbf{50}(3) (1949), 507--558.
          
	
	\bibitem{MA} G.A. Margulis, \emph{Discrete Subgroups of Semisimple Lie Groups}, Springer-Verlag, 1990.
	
	
	
	\bibitem{Markov} A.A. Markov, \emph{On unconditionally closed sets}, Mat. Sbornik \textbf{18} (1946), 3--28 (in Russian). English translation in : \emph{Three papers on topological groups: I. On the existence of periodic connected topological groups, II. On free topological groups, III. On unconditionally closed sets,}, Amer. Math. Soc. Transl. \textbf{1950} (1950), 120 pp.
	
	\bibitem{Mayer} M. Mayer, \textit{Asymptotics of matrix coefficients and closures of Fourier--Stieltjes algebras}, J. of Functional Analysis \textbf{143} (1997), 42--54.
	
	\bibitem{MEG95}	M. Megrelishvili, \emph{Group representations and construction of minimal topological groups},
	Topology Appl. \textbf{62} (1995), 1--19.
	
	\bibitem{MS} 	M. Megrelishvili, M. Shlossberg, \emph{Minimality of topological matrix groups and Fermat primes}, Topol.
	Appl. \textbf{322} (2022), doi:10.1016/j.topol.2022.108272.
	
		\bibitem{Omori}
	H. Omori,  \emph{Homomorphic images of Lie groups}, J. Math. Soc. Japan \textbf{18} (1966), 97--117.
	\bibitem{P}
	Iv. Prodanov,
	\emph{Precompact minimal group topologies and $p$-adic numbers},
	Annuaire Univ. Sofia Fac. Math. M\' ec.  \textbf{66} (1971/72), 249--266.
	%
	
	\bibitem{PS}
	Iv. Prodanov, L. Stoyanov, \emph{Every minimal abelian group is precompact}, C. R. Acad. Bulgare Sci. \textbf{37} (1984), 23--26.
	
	\bibitem{RS} 
	D. Remus, L. Stoyanov, \emph{Complete minimal topological groups}, Topology Appl. \textbf{42} (1991), 57--69.
	\bibitem{RO} D.J.S. Robinson, \emph{A Course in the Theory of Groups}, 2nd edn., Springer-Verlag, New York, 1996.
%
\bibitem{SH} M. Shlossberg, \emph{Minimality conditions equivalent  to  the finitude of Fermat  and Mersenne primes}, Axioms 2023, 12(6), 540; https://doi.org/10.3390/axioms12060540.
	\bibitem{S71}
	R.M. Stephenson, Jr., \emph{Minimal topological groups}, Math. Ann. \textbf{192} (1971), 193--195.
	

	\bibitem{SU} M. Suzuki, \emph{Group theory I}, Springer, Berlin-Heidelberg-New York, 1982.
	
	\bibitem{U}	V.V. Uspenskij, \emph{On subgroups of minimal topological groups}, Topol. Appl. \textbf{155} (2008), 1580-1606.
	\bibitem{XDSD} W. Xi, D. Dikranjan, M. Shlossberg, D. Toller,
	\emph{Hereditarily minimal topological groups}, 
	Forum Math.
	\textbf{31:3} (2019), 619--646.
	\bibitem{XS}  W. Xi, M. Shlossberg,
	\emph{C-Minimal topological groups}, Forum Math. 2021; 33(6): 1641–-1650.

   \bibitem{Zer} D. Zerling, \emph{Some theorems on (CA) analytic groups}, Trans. Amer. Math. Soc. \textbf{205} (1975), 181--192.


    \end{thebibliography}
\end{document}